\DeclareMathOperator{\hight}{ht}
\newcommand{\id}{\mbox{id}\,}
\newcommand{\fd}{\mbox{fd}\,}
\newcommand{\rfd}{\mbox{Rfd}\,}
\newcommand{\gfd}{\mbox{Gfd}\,}
\newcommand{\gpd}{\mbox{Gpd}\,}
\newcommand{\gid}{\mbox{Gid}\,}
\newcommand{\CM}{\mbox{CM}\,}
\newcommand{\CMfd}{\mbox{CM$_*$\text{-}fd}\,}
\newcommand{\CMpd}{\mbox{CM$_*$\text{-}pd}\,}
\newcommand{\CMid}{\mbox{CM$_*$\text{-}id}\,}
\newcommand{\pd}{\mbox{pd}\,}
\newcommand{\cmdim}{\mbox{CM-dim}\,}
\newcommand{\gd}{\mbox{G-dim}\,}
\newcommand{\Hom}{\mbox{Hom}\,}
\newcommand{\Ext}{\mbox{Ext}\,}
\newcommand{\Spec}{\mbox{Spec}\,}
\newcommand{\depth}{\mbox{depth}\,}
\newcommand{\width}{\mbox{width}\,}
\newcommand{\grade}{\mbox{grade}\,}
\renewcommand{\dim}{\mbox{dim}\,}
\renewcommand{\H}{\mbox{H}}
\newcommand{\R}{\mathbb{R}}
\newcommand{\fn}{\frak{n}}
\newcommand{\fm}{\frak{m}}
\newcommand{\fp}{\frak{p}}
\newcommand{\fq}{\frak{q}}
\newtheorem{thm}{Theorem}[section]
\newtheorem{cor}[thm]{Corollary}
\newtheorem{lem}[thm]{Lemma}
\newtheorem{prop}[thm]{Proposition}
\newtheorem{defn}[thm]{Definition}
\newtheorem{rem}[thm]{Remark}
\newtheorem{def/not}[thm]{Definition/Notation}
\begin{document}

\bibliographystyle{amsplain}

\date{}

\author{Parviz Sahandi, Tirdad Sharif, and Siamak Yassemi}

\address{(Sahandi) \\Department of Pure Mathematics, Faculty of Mathematical Sciences, University of Tabriz, Tabriz, Iran.}
\email{sahandi@ipm.ir}

\address{(Sharif) \\School of Mathematics, Institute for Research in Fundamental Sciences (IPM), P.O. Box. 19395-5746, Tehran, Iran.}

\email{sharif@ipm.ir}

\address{(Yassemi) \\School of Mathematics, Statistics and Computer Science, University of
Tehran, Tehran, Iran.}

\email{yassemi@ut.ac.ir}

\keywords{Cohen-Macaulay flat dimension, Cohen-Macaulay projective dimension, Cohen-Macaulay injective dimension}

\subjclass[2010]{13H10, 13C15, 13D05}

\title{Cohen-Macaulay homological dimensions}

\begin{abstract}

We introduce new homological dimensions, namely the Cohen-Macaulay projective, injective and flat dimensions for homologically bounded complexes. Among other things we show that (a) these
invariants characterize the Cohen-Macaulay property for local rings, (b)
Cohen-Macaulay flat dimension fits between the Gorenstein flat dimension and the large restricted flat dimension, and (c) Cohen-Macaulay injective dimension fits between the Gorenstein injective dimension and the Chouinard invariant.
\end{abstract}

\maketitle

\section{Introduction}

A commutative Noetherian local ring $R$ is
regular if the residue field $k$ has finite projective dimension
and only if all $R$-modules have finite projective dimension \cite{AB1} \cite{S}. This theorem of Auslander, Buchsbaum and Serre is a main motivation of studing homological dimensions. The injective and flat
dimensions have similar behavior.

Auslander and Bridger \cite{AB}, introduced a homological dimension for finitely generated modules
designed to single out modules with properties similar to those of
modules over Gorenstein rings.  They called it $\mbox{G}$-dimension
and it is a refinement  of the projective dimension and showed that
a local Noetherian ring $(R,\fm,k)$ is Gorenstein if the residue
field $k$ has finite $\mbox{G}$-dimension and only if all finitely
generated $R$-modules have finite $\mbox{G}$-dimension.

To extend the $\mbox{G}$-dimension beyond the realm of finitely generated modules
over Noetherian rings, Enochs and Jenda \cite{EJ1} introduced the notion of Gorenstein projective module. Then the notion of Gorenstein projective dimension was studied in \cite{C00}.

The notion of Gorenstein injective module is dual to that of
Gorenstein projective module and were introduced in the same paper
by Enochs and Jenda \cite{EJ1}. Then the notion of Gorenstein injective dimension was studied in \cite{C00}.

Another extension of the $\mbox{G}$-dimension is based on Gorenstein flat modules,
a notion due to Enochs, Jenda, and Torrecillas \cite{EJT}. Then the notion of Gorenstein flat dimension was studied in \cite{C00}.

More recently, the complete intersection dimension has been introduced for finitely generated $R$-modules, using quasi-deformations and projective dimension, to characterize the complete intersection property of local rings \cite{AGP}.  Parallel to Gorenstein projective, injective  and flat dimensions, the complete intersection projective, injective  and flat dimensions have been introduced and studied in \cite{SSY}, \cite{SSY11}, \cite{SW} and \cite{SSY12}.

The \textit{Cohen-Macaulay dimension} of a finitely generated $R$-module $M$, as defined
by Gerko \cite{G} is
\begin{align*}
&\cmdim_R(M):= \\[1ex]
        &\inf\left\{\gd_Q(M\otimes_RR') -\gd_Q(R')\bigg| \begin{array}{l} R\rightarrow R'\leftarrow Q\text{ is a
}\\\CM\text{-quasi-deformation } \end{array} \right\}
\end{align*}
(see Section 2 for the definition of $\CM$-quasi-deformation).

The purpose of this paper is to develop a similar theory of projective, injective  and flat analogue for Cohen-Macaulay case. Thus we introduce Cohen-Macaulay projective dimension $(\CMpd)$, Cohen-Macaulay injective dimension $(\CMid)$ and Cohen-Macaulay flat dimension $(\CMfd)$ for homologically bounded complexes over commutative
Noetherian local rings $(R,\fm,k)$ with identity (see Definition \ref{d}). In particular $\cmdim_R(M)=\CMpd_R(M)=\CMfd_R(M)$, for a finitely generated $R$-module $M$. Among other things, we show that these invariants characterize the Cohen-Macaulay property for local rings. We also show that if $M$ is a homologically bounded $R$-complex, then we have the inequalities $$\rfd_R(M)\leq\CMfd_R(M)\leq\gfd_R(M),$$ with equality to the left of any finite value. In particular if $\gfd_R(M)<\infty$, then $\CMfd_R(M)=\gfd_R(M)$, and if $\CMfd_R(M)<\infty$, then $$\CMfd_R(M)=\sup\{\depth R_{\fp} - \depth_{R_{\fp}}(M_{\fp})\mid \fp\in\Spec(R)\},$$ where $\rfd_R(M)$ is the large restricted flat dimension. Also, we show that there are inequalities $$\sup \{\depth R_\fp - \width _{R_{\fp}} M_\fp | \fp\in \Spec(R)\}\le\CMid_R(M)\le\gid_R(M),$$ such that if $\gid_R(M)<\infty$, then $\CMid_R(M)=\gid_R(M)$, and if $\CMid_R(M)<\infty$ for a homologically finite $R$-complex $M$, then
\begin{align*}
\CMid_R(M)=&\sup\{\depth R_{\fp} - \width_{R_{\fp}}(M_{\fp})\mid \fp\in\Spec(R)\} \\[1ex]
        = & \depth R-\inf(M).
\end{align*}

Finally we compare our Cohen-Macaulay homological dimensions with the homological dimensions of Holm and J{\o}rgension \cite{HJ}.

\section{Definitions and Notations}

Let $(R,\fm,k)$ and $(S,\fn,l)$ be commutative local Noetherian rings.

We work in the derived category $\mathcal{D}(R)$ of complexes of $R$-modules, indexed homologically. A complex $M$ is \emph{homologically bounded} if $\H_i(M) = 0$ for all $|i|\gg0$; and it is \emph{homologically finite} if $\oplus_i\H_i(M)$ is finitely generated. 

Fix $R$-complexes $M$ and $N$. Let $M\otimes_R^{\mathbf{L}}N$ and $\mathbf{R}\Hom_R(M,N)$ denote the left-derived tensor product
and right-derived homomorphism complexes, respectively. Let $\inf(M)$ and $\sup(M)$ denote the  infimum and supremum, respectively, of the set $\{n\in \mathbb{Z} \mid \H_n(M)\neq 0\}$.

\begin{def/not}{\em A homologically finite $R$-complex $M$ is \emph{reflexive} if the
complex $\mathbf{R}\Hom_R(M,R)$ is homologically bounded and the biduality morphism $\delta_M:M\to\mathbf{R}\Hom_R(\mathbf{R}\Hom_R(M,R),R)$ is
an isomorphism in $\mathcal{D}(R)$. Set
$$\gd_R(M):=-\inf(\mathbf{R}\Hom_R(M,R)),$$
if $M$ is reflexive, and $\gd_R(M):=\infty$ otherwise. Set also $\gd_R(0)=-\infty$. This is the G-dimension of Auslander and Bridger \cite{AB} and Yassemi \cite{Y95}.
}
\end{def/not}

\begin{def/not}{\em
An $R$-module $G$ is \emph{$G$-projective} if there exists an exact sequence of $R$-modules
$$X=\cdots\stackrel{\partial^X_2}{\to}P_1\stackrel{\partial^X_1}{\to}P_0\stackrel{\partial^X_0}{\to}
P_{-1}\stackrel{\partial^X_{-1}}{\to}P_{-2}\stackrel{\partial^X_{-2}}{\to}\cdots$$
such that $G\cong \mbox{Coker}(\partial^X_1)$, each $P_i$ is projective, and $\Hom_R(X,Q)$ is exact for each projective $R$-module $Q$.

An $R$-module $G$ is \emph{$G$-flat} if there exists an exact sequence of $R$-modules
$$Y=\cdots\stackrel{\partial^Y_2}{\to}F_1\stackrel{\partial^Y_1}{\to}F_0\stackrel{\partial^Y_0}{\to}
F_{-1}\stackrel{\partial^Y_{-1}}{\to}F_{-2}\stackrel{\partial^Y_{-2}}{\to}\cdots$$
such that $G\cong \mbox{Coker}(\partial^Y_1)$, each $F_i$ is flat, and $I\otimes_RY$ is exact for each injective $R$-module $I$.

An $R$-module $G$ is \emph{$G$-injective} if there exists an exact sequence of $R$-modules
$$Z=\cdots\stackrel{\partial^Z_2}{\to}I_1\stackrel{\partial^Z_1}{\to}I_0\stackrel{\partial^Z_0}{\to}I_{-1}
\stackrel{\partial^Z_{-1}}{\to}I_{-2}\stackrel{\partial^Z_{-2}}{\to}\cdots$$
such that $G\cong \mbox{Coker}(\partial^Z_1)$, each $I_i$ is injective, and $\Hom_R(I,Z)$ is exact for each injective $R$-module $I$.

Let $M$ be a homologically bounded $R$-complex. A $G$-projective resolution of $M$ is an isomorphism $H\simeq M$ in $\mathcal{D}(R)$ where
$H$ is a complex of $G$-projective $R$-modules such that $H_i = 0$ for all $i \ll 0$. The $G$-projective dimension of $M$ is
$$\gpd_R(M):=\inf\{\sup\{n\mid H_n\neq0\}\mid H\simeq M\text{ is a }G\text{-projective resolution}\}.$$

The $G$-flat dimension of $M$ is defined similarly and denoted $\gfd_R(M)$, while the $G$-injective dimension $\gid_R(M)$ is dual \cite{C00}. These are the $G$-projective, $G$-flat, and $G$-injective dimensions of
Enochs, Jenda and Torrecillas (which they consider only in the case of modules) \cite{EJ1} and \cite{EJT}.
}
\end{def/not}

\begin{rem}\label{2.3}{\em (1) It is known that, for a homologically bounded $R$-complex $M$, $\gpd_R(M)$ and $\gfd_R(M)$ are simultaneously finite \cite[Proposition 4.3]{SW}.

(2) Let $R\to S$ be a flat local homomorphism and $M$ a finitely generated $R$-module. Then it is well-known that, $\gd_R(M)=\gd_S(M\otimes_RS)$ and  $\gd_R(M)=\gfd_R(M)=\gpd_R(M)$ \cite{CFH}.

(3) The finiteness of $G$-projective, $G$-flat, and $G$-injective dimensions characterize the Gorenstein property of local rings \cite{C00}.  }
\end{rem}

\begin{def/not}{\em A finitely generated $R$-module $M$ is called \textit{G-perfect} if $\gd_RM=\grade_R M:=\inf\{i\mid\Ext^i_R(M,R)\neq0\}$. Let $Q$ be a local ring and $J$ an ideal of $Q$.
By abuse of language we say that $J$ is \textit{G-perfect} if the $Q$-module $Q/J$ has the corresponding
property.

A \emph{$\CM$-deformation} of $R$ is a surjective local homomorphism $Q\to R$ such that $J=\ker(Q\to R)$ is a G-perfect ideal  in $Q$. A
\emph{$\CM$-quasi-deformation} of $R$ is
a diagram of local homomorphisms $R\rightarrow R'\leftarrow Q$,
with $R\rightarrow R'$ a flat extension and $R'\leftarrow Q$ a
$\CM$-deformation.

The \textit{Cohen-Macaulay dimension} of a nonzero finitely generated $R$-module $M$, as defined
by Gerko \cite{G} is
\begin{align*}
&\cmdim_R(M):= \\[1ex]
        &\inf\left\{\gd_Q(M\otimes_RR') -\gd_Q(R')\bigg| \begin{array}{l} R\rightarrow R'\leftarrow Q\text{ is a
}\\\CM\text{-quasi-deformation } \end{array} \right\},
\end{align*}
and set $\cmdim_R(0)=-\infty$.
}
\end{def/not}
\begin{rem}\label{2.5}{\em By \cite[Theorems 3.8 and 3.9, and Proposition 3.10]{G} we have

(1) $R$ is Cohen-Macaulay if and only if $\cmdim_R(k)<\infty$.

(2) If $M$ is a finitely generated $R$-module such that $\cmdim_R(M)<\infty$, then $$\cmdim_R(M)=\depth R-\depth_R(M).$$

(3) For each prime ideal $\fp$ of $R$, $\cmdim_{R_{\fp}}(M_{\fp})\le\cmdim_R(M)$.
}
\end{rem}

\begin{def/not}{\em A finitely generated $R$-module $C$ is \emph{semidualizing} if the homothety morphism $\chi_C^R:R\to \mathbf{R}\Hom_R(C,C)$ is an isomorphism in $\mathcal{D}(R)$. A finitely generated $R$-module $D$ is \emph{canonical} if it is semidualizing and $\id_R(D)$ is finite.}
\end{def/not}

Let $\varphi:R\to S$ be a local ring homomorphism. We denote $\widehat{R}$ the completion of $R$ at its maximal
ideal and let $\varepsilon_R:R\to \widehat{R}$ denote the natural map. The \emph{completion of $\varphi$} is the unique local ring homomorphism $\widehat{\varphi}:\widehat{R}\to \widehat{S}$ such that $\widehat{\varphi}\circ\varepsilon_R=\varepsilon_S\circ\varphi$. The \emph{semi-completion of $\varphi$} is the composition $\varepsilon_S\circ\varphi:R\to \widehat{S}$.

\section{Cohen-Macaulay projective, flat and injective dimensions}

In this section we introduce a Cohen-Macaulay projective
dimension, Cohen-Macaulay flat dimension, and
Cohen-Macaulay injective dimension for homologically bounded $R$-complexes and derive their basic properties. When $M$ is a module, Definition \ref{d} is from \cite{SSY}, which is in turn modeled on \cite{AGP} and \cite{G}.

\begin{defn}\label{d}{\em Let $(R,\fm)$ be a local ring. For each homologically bounded $R$-complex $M$, define the Cohen-Macaulay
projective dimension, Cohen-Macaulay flat dimension and Cohen-Macaulay injective dimension of $M$ as,
$$\CMpd_R(M):=\inf\left\{\gpd_Q(M\otimes_RR') -\gfd_Q(R')\bigg| \begin{array}{l} R\rightarrow R'\leftarrow Q\text{ is a
}\\\CM\text{-quasi-deformation} \end{array} \right\}$$
$$\CMfd_R(M):=\inf\left\{\gfd_Q(M\otimes_RR') -\gfd_Q(R')\bigg| \begin{array}{l} R\rightarrow R'\leftarrow Q\text{ is a
}\\\CM\text{-quasi-deformation} \end{array} \right\}$$
$$\CMid_R(M):=\inf\left\{\gid_Q(M\otimes_RR') -\gfd_Q(R')\bigg| \begin{array}{l} R\rightarrow R'\leftarrow Q\text{ is a
}\\\CM\text{-quasi-deformation} \end{array} \right\}$$
respectively.
}
\end{defn}

\begin{rem}\label{3.2}{\em (1) It is known that $\gpd_R(M)$ and $\gfd_R(M)$ are simultaneously finite by Remark \ref{2.3}(1). Hence $\CMpd_R(M)$ and $\CMfd_R(M)$ are simultaneously finite.

(2) By taking the trivial $\CM$-quasi-deformation $R\rightarrow
R\leftarrow R$, one has $$\CMpd_R(M)\le\gpd_R(M),$$ $$\CMfd_R(M)\le\gfd_R(M),$$ $$\CMid_R(M)\le\gid_R(M).$$

(3) By Remark \ref{2.3}(2) it can be seen that if $M$ is a finitely generated $R$-module then, $\CMpd_R(M)=\CMfd_R(M)=\mbox{CM}$-$\dim_R(M)$.
}
\end{rem}

The following two theorems show that the finiteness of these dimensions characterize the Cohen-Macaulay rings.

\begin{thm} \label{CHAR} The following conditions are equivalent:
\begin{itemize}
\item[(1)] The ring $R$ is Cohen-Macaulay.
\item[(2)] $\CMpd_R(M)<\infty$ for every homologically bounded $R$-complex $M$.
\item[(3)] $\CMpd_R(k)<\infty$.
\item[(4)] $\CMfd_R(M)<\infty$ for every homologically bounded $R$-complex $M$.
\item[(5)] $\CMfd_R k<\infty$.
\end{itemize}
\end{thm}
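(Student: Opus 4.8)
The plan is to reduce the five conditions to a short cycle. By Remark \ref{3.2}(1) the invariants $\CMpd_R(M)$ and $\CMfd_R(M)$ are simultaneously finite for every homologically bounded $R$-complex $M$; reading this for arbitrary $M$ gives (2) $\Leftrightarrow$ (4), and for $M=k$ gives (3) $\Leftrightarrow$ (5). It therefore suffices to prove the implications (1) $\Rightarrow$ (2) $\Rightarrow$ (3) $\Rightarrow$ (1).

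The two easy implications are immediate from the earlier material. Taking $M=k$ yields (2) $\Rightarrow$ (3). For (3) $\Rightarrow$ (1), since $k$ is a finitely generated $R$-module, Remark \ref{3.2}(3) gives $\CMpd_R(k)=\cmdim_R(k)$, and finiteness of $\cmdim_R(k)$ forces $R$ to be Cohen-Macaulay by Remark \ref{2.5}(1).

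The real content is (1) $\Rightarrow$ (2). First I would descend to a regular ground ring via completion. Since $R$ is Cohen-Macaulay, so is $\widehat{R}$, and the natural map $\varepsilon_R:R\to\widehat{R}$ is a flat local homomorphism. By the Cohen structure theorem write $\widehat{R}=Q/J$ with $(Q,\fQ)$ regular local. As $Q$ is regular and $\widehat{R}$ is Cohen-Macaulay, the Auslander-Buchsbaum formula gives $\pd_Q(\widehat{R})=\depth Q-\depth\widehat{R}=\h J$, while $\grade_Q(\widehat{R})=\h J$ because $Q$ is Cohen-Macaulay; since G-dimension and projective dimension agree over the regular ring $Q$, this reads $\gd_Q(\widehat{R})=\grade_Q(\widehat{R})$, so $J$ is G-perfect. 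Hence $Q\to\widehat{R}$ is a $\CM$-deformation and $R\to\widehat{R}\leftarrow Q$ is a $\CM$-quasi-deformation, which feeds into the infimum defining $\CMpd_R(M)$ to give
$$\CMpd_R(M)\le\gpd_Q(M\otimes_R\widehat{R})-\gfd_Q(\widehat{R}).$$

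It remains to check that both terms on the right are finite, and this is where the one genuine obstacle lies. Flatness of $\varepsilon_R$ gives $M\otimes_R\widehat{R}\simeq M\otimes_R^{\mathbf{L}}\widehat{R}$, so $\H_i(M\otimes_R\widehat{R})\cong\H_i(M)\otimes_R\widehat{R}$ and $M\otimes_R\widehat{R}$ remains homologically bounded when regarded as a $Q$-complex through $Q\to\widehat{R}$; the same holds for $\widehat{R}$. The key fact is that $Q$ has finite global dimension $\dim Q$, and over a ring of finite global dimension every homologically bounded complex has finite projective dimension (concretely $\pd_Q(N)\le\sup(N)+\dim Q$). Since finite projective dimension forces finite G-projective and G-flat dimension, both $\gpd_Q(M\otimes_R\widehat{R})$ and $\gfd_Q(\widehat{R})$ are finite, and therefore $\CMpd_R(M)<\infty$. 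The point that must be handled with care is precisely that $M$ is only assumed homologically bounded, not homologically finite, so this finiteness cannot come from an Auslander-Buchsbaum depth computation and must instead be read off from the finite global dimension of $Q$.
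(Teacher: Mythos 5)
Your proposal is correct and follows essentially the same route as the paper: the implication graph is arranged slightly differently (you use the simultaneous finiteness of $\CMpd$ and $\CMfd$ from Remark \ref{3.2}(1) to get (2)$\Leftrightarrow$(4) and (3)$\Leftrightarrow$(5), and close the cycle at (3)$\Rightarrow$(1) rather than (5)$\Rightarrow$(1)), but the substantive step (1)$\Rightarrow$(2) is identical — completion, Cohen's structure theorem, G-perfectness of $J$ from regularity of $Q$ and Cohen-Macaulayness of $\widehat{R}$, and finiteness of $\gpd_Q$ over the regular ring $Q$ — and the converse uses the same reduction to $\cmdim_R(k)$ via Remarks \ref{3.2}(3) and \ref{2.5}(1). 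Your added detail on why $J$ is G-perfect and why finite global dimension of $Q$ handles merely homologically bounded complexes is a correct elaboration of what the paper asserts briefly.
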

\begin{proof} (1)$\Rightarrow$(2) Let $\widehat{R}$ be
the $\fm$-adic completion of $R$. Since $R$ is Cohen-Macaulay, so
is $\widehat{R}$. Therefore by Cohen's structure theorem,
$\widehat{R}$ is isomorphic to $Q/J$, where $Q$ is a regular local
ring. By Cohen-Macaulay-ness of $\widehat{R}$ and regularity of
$Q$, the ideal $J$ is G-perfect.  Thus $R\rightarrow
\widehat{R}\leftarrow Q$ is a $\CM$-quasi-deformation. Since $Q$
is regular $\gpd_Q(M\otimes_R\widehat{R})<\infty$ for every homologically bounded $R$-complex $M$. Thus
$\CMpd_R(M)$ is finite.

(2)$\Rightarrow$(3) and (4)$\Rightarrow$(5) are trivial.

(2)$\Rightarrow$(4) and (3)$\Rightarrow$(5) are trivial since $\CMfd_R(M)\le\CMpd_R(M)$.

(5)$\Rightarrow$(1) It follows from Remark \ref{3.2}(3) that $\cmdim_R(k)=\CMfd_R(k)<\infty$. Now Remark \ref{2.5}(1), completes the proof.
\end{proof}

\begin{thm}\label{OH} The following conditions are equivalent.
\begin{itemize}
\item[(1)] The ring $R$ is Cohen-Macaulay.
\item[(2)] $\CMid_R(M)<\infty$ for every homologically bounded $R$-complex $M$.
\item[(3)] $\CMid_R(k)<\infty$.
\end{itemize}
\end{thm}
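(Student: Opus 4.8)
The plan is to follow the architecture of the proof of Theorem \ref{CHAR}, establishing (1)$\Rightarrow$(2)$\Rightarrow$(3) and then closing the cycle with (3)$\Rightarrow$(1). For (1)$\Rightarrow$(2) I would repeat the completion argument: as $R$ is Cohen-Macaulay so is $\widehat{R}$, and Cohen's structure theorem writes $\widehat{R}\cong Q/J$ with $Q$ regular; regularity of $Q$ together with Cohen-Macaulayness of $\widehat{R}$ makes $J$ G-perfect, so $R\to\widehat{R}\leftarrow Q$ is a $\CM$-quasi-deformation. For any homologically bounded $M$ the complex $M\otimes_R\widehat{R}$ is homologically bounded over the regular ring $Q$, hence $\id_Q(M\otimes_R\widehat{R})<\infty$ and a fortiori $\gid_Q(M\otimes_R\widehat{R})<\infty$; since $\gfd_Q(\widehat{R})<\infty$ as well, the defining infimum is finite, so $\CMid_R(M)<\infty$. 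The implication (2)$\Rightarrow$(3) is immediate.

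For (3)$\Rightarrow$(1), from $\CMid_R(k)<\infty$ I would first fix a $\CM$-quasi-deformation $R\to R'\leftarrow Q$ realizing a finite value, so that $\gid_Q(k\otimes_R R')<\infty$; here the subtracted term $\gfd_Q(R')$ is automatically finite because $J=\ker(Q\to R')$ is G-perfect, whence $\gd_Q(R')=\grade_Q(R')<\infty$ and $\gfd_Q(R')=\gd_Q(R')$ by Remark \ref{2.3}(2). The pivotal observation is that $N:=k\otimes_R R'\cong R'/\fm R'$ is a nonzero finitely generated $Q$-module: it is a cyclic quotient of $R'=Q/J$, and it is nonzero since it is the closed fibre of the flat local map $R\to R'$. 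Thus condition (3) produces a nonzero finitely generated $Q$-module of finite Gorenstein injective dimension.

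To conclude I would invoke the Gorenstein-injective analogue of Bass's theorem: a local ring possessing a nonzero finitely generated module of finite Gorenstein injective dimension is Cohen-Macaulay. Applied to $N$ this yields that $Q$ is Cohen-Macaulay. Since $Q$ is then Cohen-Macaulay and $J$ is G-perfect, the Auslander-Bridger equality $\gd_Q(R')=\depth Q-\depth R'$ combined with $\grade_Q(R')=\depth Q-\dim R'$ forces $\depth R'=\dim R'$, so $R'$ is Cohen-Macaulay; and as $R\to R'$ is flat local with Cohen-Macaulay target, $R$ is Cohen-Macaulay by descent (the splittings $\dim R'=\dim R+\dim(R'/\fm R')$ and $\depth R'=\depth R+\depth(R'/\fm R')$ force equality for $R$).

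The main obstacle is the Gorenstein-injective Bass theorem invoked above, since the ring $Q$ coming from an arbitrary quasi-deformation need not carry a dualizing complex. I would circumvent this by completing: finiteness of Gorenstein injective dimension is preserved under $Q\to\widehat{Q}$ for finitely generated modules, and $\widehat{Q}$ admits a dualizing complex $D$, so I may assume $Q$ complete. Then $\gid_Q(N)<\infty$ puts $N$ in the Bass class $\mathcal{B}_D$, giving $N\simeq D\utp\mathbf{R}\Hom_Q(D,N)$ with $\mathbf{R}\Hom_Q(D,N)$ of finite Gorenstein flat dimension; an amplitude inequality then forces the amplitude of $D$ to vanish, that is $\dim Q=\depth Q$. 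Making this amplitude estimate precise is the one genuinely nonformal step; the remainder is a transcription of the argument for Theorem \ref{CHAR}.
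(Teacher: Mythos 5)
Your implications (1)$\Rightarrow$(2)$\Rightarrow$(3) match the paper, but the crucial step (3)$\Rightarrow$(1) has a genuine gap. You reduce to producing a nonzero finitely generated $Q$-module $N=k\otimes_RR'$ of finite Gorenstein injective dimension and then invoke ``the Gorenstein-injective analogue of Bass's theorem.'' That statement is not free: in full generality it is an open-type question, and even in the dualizing-complex case (to which you correctly reduce by completing $Q$ and citing ascent of finiteness of $\gid$) your sketch of its proof does not close. Writing $N\simeq D\utp_Q\uhom_Q(D,N)$ from membership in the Bass class only gives that $\uhom_Q(D,N)$ lies in the Auslander class, i.e.\ has finite \emph{Gorenstein} flat dimension; Iversen's amplitude inequality, which is what you need to force $\mathrm{amp}(D)=0$, requires the tensoring complex to have finite \emph{projective} (or flat) dimension. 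So the ``one genuinely nonformal step'' you flag is exactly where the argument breaks, unless you import the theorem of Khatami--Tousi--Yassemi (Proc.\ Amer.\ Math.\ Soc.\ 137 (2009)) as a black box --- which you neither cite nor prove.

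The paper sidesteps all of this by using the one feature of $N$ that you note but do not exploit: it is a \emph{cyclic} $Q$-module. By Foxby--Frankild \cite[Theorem 4.5]{FF}, a local ring admitting a nonzero cyclic module of finite Gorenstein injective dimension is Gorenstein; no dualizing complex, no completion, and no amplitude estimate are needed, and one gets the stronger conclusion that $Q$ is Gorenstein (hence Cohen--Macaulay). From there the two arguments coincide: G-perfectness of $\ker(Q\to R')$ together with the Auslander--Bridger formula forces $\dim R'=\depth R'$, and flat descent (\cite[Theorem 2.1.7]{BH}) gives that $R$ is Cohen--Macaulay. I recommend replacing your Bass-theorem step with the citation of \cite{FF}; as written, the proposal is incomplete at its pivotal point.
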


\begin{proof} (1)$\Rightarrow$(2) is the same as proof of part (1)$\Rightarrow$(2) of Theorem \ref{CHAR}.

(2)$\Rightarrow$(3) is trivial.

(3)$\Rightarrow$(1) Suppose $\CMid_R(k)<\infty$. So that there
 is a $\CM$-quasi-deformation $R\rightarrow R'\leftarrow Q$,
 such that $\gid_Q(k\otimes_RR')$ is finite. It is clear that $k\otimes_RR'$
 is a cyclic $Q$-module. Consequently $Q$ is a
Gorenstein ring by \cite[Theorem 4.5]{FF}. We plan to show that $R'$ is a
 Cohen-Macaulay ring. Let $I=\ker(Q\rightarrow R')$ which is G-perfect by definition.
 We have
\begin{align*}
\hight I= &\grade(I,Q) \\[1ex]
        = & \gd_QR' \\[1ex]
        = & \depth Q-\depth_QR' \\[1ex]
        = & \depth Q-\depth R' \\[1ex]
        = & \dim Q-\depth R' \\[1ex]
        = & \hight I+\dim R'-\depth R',
\end{align*}
in which the equalities follow from Cohen-Macaulay-ness of $Q$;
G-perfectness of $I$; Auslander-Buchsbaum formula;
\cite[Exercise 1.2.26]{BH}; Cohen-Macaulay-ness of $Q$; and \cite[Corollary 2.1.4]{BH} respectively. Therefore we obtain that $\dim R'-\depth
R'=0$, that is $R'$ is Cohen-Macaulay. Now \cite[Theorem
2.1.7]{BH} gives us the desired result.
\end{proof}

The proof of the above theorem says some thing more, viz., a local
ring $R$ is Cohen-Macaulay if and only if there exists a cyclic
$R$-module of finite Cohen-Macaulay injective dimension.

\begin{cor} Assume that $C\neq0$ is a cyclic $R$-module. Then $R$ is a Cohen-Macaulay ring if and only if
$\CMid_RC<\infty$.
\end{cor}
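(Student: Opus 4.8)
The plan is to obtain both implications directly from Theorem~\ref{OH}, after observing that the argument establishing $(3)\Rightarrow(1)$ there used nothing about $k$ beyond the fact that it is a nonzero cyclic module.

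For the forward implication, suppose $R$ is Cohen-Macaulay. Since a cyclic module $C$ is in particular a homologically bounded $R$-complex, the implication $(1)\Rightarrow(2)$ of Theorem~\ref{OH} gives $\CMid_R C<\infty$ at once.

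For the reverse implication, suppose $\CMid_R C<\infty$. Then there is a $\CM$-quasi-deformation $R\to R'\leftarrow Q$ with $\gid_Q(C\otimes_R R')<\infty$. Writing $C=R/\fa$ for an ideal $\fa\subsetneq R$, we have $C\otimes_R R'\cong R'/\fa R'$, which is a cyclic $R'$-module; composing with the surjection $Q\to R'$ exhibits $C\otimes_R R'$ as a nonzero cyclic $Q$-module. This is the single place where cyclicity of $C$---rather than the particular choice $C=k$---is used. By \cite[Theorem 4.5]{FF}, finiteness of $\gid_Q(C\otimes_R R')$ for a nonzero cyclic $Q$-module forces $Q$ to be Gorenstein, and from this point the computation in the proof of Theorem~\ref{OH} carries over unchanged: setting $I=\ker(Q\to R')$, the chain of equalities running from $\hight I=\grade(I,Q)$ through to $\hight I+\dim R'-\depth R'$ yields $\dim R'-\depth R'=0$, so $R'$ is Cohen-Macaulay, and \cite[Theorem 2.1.7]{BH} then transfers Cohen-Macaulayness back along the flat local map $R\to R'$ to conclude that $R$ is Cohen-Macaulay.

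Because the entire reverse implication is a transcription of the proof of Theorem~\ref{OH} with $k$ replaced by $C$, I expect no genuine difficulty; the only step meriting explicit attention is verifying that $C\otimes_R R'$ remains cyclic (and nonzero) as a $Q$-module, which is immediate from the cyclicity of $C$ together with surjectivity of $Q\to R'$.
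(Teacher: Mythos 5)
Your proposal is correct and is exactly the argument the paper intends: the authors state right before the corollary that the proof of Theorem~\ref{OH} ``says something more,'' namely that its implication $(3)\Rightarrow(1)$ uses only that $k$ is a nonzero cyclic module, which is precisely the observation you make. The one point worth stating slightly more carefully is that nonvanishing of $C\otimes_R R'$ comes from faithful flatness of the local homomorphism $R\to R'$ rather than from cyclicity and surjectivity alone, but this is immediate and does not affect the argument.
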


\begin{rem}\label{gid}{\em Let $M$ be a homologically finite $R$-complex such that $\gid_R(M)<\infty$. Then by \cite[Theorem 3.6]{FF}, we obtain that $\gid_{\widehat{R}}(M\otimes_R\widehat{R})<\infty$. Hence using \cite[Corollary 2.3]{CS10}, we have
$$\gid_{\widehat{R}}(M\otimes_R\widehat{R})=\depth \widehat{R}-\inf(M\otimes_R\widehat{R})=\depth R-\inf(M)=\gid_R(M).$$
 }
\end{rem}

\begin{prop} \label{ii} Let $M$ be a homologically finite $R$-complex. Then
$$\CMid_R(M)=\inf\left\{\gid_Q(M\otimes_RR') -\gfd_Q(R')\bigg| \begin{array}{l} R\rightarrow R'\leftarrow Q\text{ is a
}\\\CM\text{-quasi-deformation}\\\text{such that }Q\text{ is complete} \end{array} \right\}.$$
\end{prop}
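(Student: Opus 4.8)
The plan is to prove the two inequalities separately. Since every $\CM$-quasi-deformation $R\to R'\leftarrow Q$ with $Q$ complete is in particular a $\CM$-quasi-deformation, the infimum on the right-hand side is taken over a subclass of the diagrams defining $\CMid_R(M)$; hence the inequality
$$\CMid_R(M)\le\inf\left\{\gid_Q(M\otimes_RR')-\gfd_Q(R')\mid R\to R'\leftarrow Q\text{ is a }\CM\text{-quasi-deformation},\ Q\text{ complete}\right\}$$
is immediate. The content lies in the reverse inequality, for which I would show that each individual diagram appearing in the defining infimum can be replaced by one with $Q$ complete without increasing the value of $\gid_Q(M\otimes_RR')-\gfd_Q(R')$.

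Given a $\CM$-quasi-deformation $R\to R'\leftarrow Q$ with $J=\ker(Q\to R')$, I complete $Q$ at its maximal ideal to obtain $\widehat{Q}$ and set $\widehat{R'}=R'\otimes_Q\widehat{Q}$. Since $R'=Q/J$ is module-finite over $Q$ and local, $\widehat{R'}$ is the completion of $R'$, and the natural map $R'\to\widehat{R'}$ is flat. First I would verify that $R\to\widehat{R'}\leftarrow\widehat{Q}$ is again a $\CM$-quasi-deformation with $\widehat{Q}$ complete: the composite $R\to R'\to\widehat{R'}$ is flat local; the map $\widehat{Q}\to\widehat{R'}$ is surjective with kernel $\widehat{J}=J\widehat{Q}$; and $\widehat{J}$ is G-perfect because both $\gd$ and $\grade$ of the $Q$-module $Q/J$ are preserved under the faithfully flat base change $Q\to\widehat{Q}$ (Remark \ref{2.3}(2) for the G-dimension, and flat base change for $\Ext$ for the grade). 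Next I would compare the two summands. For the flat term, $\widehat{R'}=R'\otimes_Q\widehat{Q}$ with $Q\to\widehat{Q}$ flat, so $\gfd_{\widehat{Q}}(\widehat{R'})=\gfd_Q(R')$ by Remark \ref{2.3}(2). For the injective term, I first note that $M\otimes_RR'$ is a homologically finite $Q$-complex, since its homology $\H(M)\otimes_RR'$ is finitely generated over $R'$ and hence over $Q$, as $R'$ is a finite $Q$-algebra. We may assume $\gid_Q(M\otimes_RR')<\infty$, as otherwise the diagram contributes $+\infty$ to the defining infimum and may be discarded. Because $M\otimes_R\widehat{R'}=(M\otimes_RR')\otimes_Q\widehat{Q}$ (associativity of tensor together with $R'\otimes_Q\widehat{Q}=\widehat{R'}$), Remark \ref{gid} applied to the homologically finite $Q$-complex $M\otimes_RR'$ gives $\gid_{\widehat{Q}}(M\otimes_R\widehat{R'})=\gid_Q(M\otimes_RR')$. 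Combining the two comparisons yields
$$\gid_{\widehat{Q}}(M\otimes_R\widehat{R'})-\gfd_{\widehat{Q}}(\widehat{R'})=\gid_Q(M\otimes_RR')-\gfd_Q(R'),$$
so the restricted infimum is bounded above by the value attached to every finite-value diagram, hence by $\CMid_R(M)$. Together with the easy inequality this proves the proposition.

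The main obstacle is confirming that completion sends $\CM$-quasi-deformations to $\CM$-quasi-deformations, that is, the preservation of G-perfectness of $J$ under completion, and recognizing that the invariance of the Gorenstein injective dimension under completion is exactly the content of Remark \ref{gid}. Once the $Q$-complex $M\otimes_RR'$ is identified as homologically finite and of finite Gorenstein injective dimension, that remark supplies the decisive equality, and the rest is a routine bookkeeping of the two flat base changes.
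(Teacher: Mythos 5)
Your proposal is correct and follows essentially the same route as the paper: pass from $R\to R'\leftarrow Q$ to $R\to\widehat{R'}\leftarrow\widehat{Q}$, use Remark \ref{gid} applied to the homologically finite $Q$-complex $M\otimes_RR'$ to preserve the Gorenstein injective term, and use flat base change to preserve $\gfd_Q(R')$. The only difference is that you spell out the verification that the completed diagram is again a $\CM$-quasi-deformation and that $M\otimes_RR'$ is homologically finite over $Q$, details the paper leaves implicit.
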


\begin{proof} It is clear that the left hand side is less than or equal to the right hand side. Now let $R\rightarrow R'\leftarrow Q$ be a $\CM$-quasi-deformation. Then note that $R\rightarrow \widehat{R'}\leftarrow \widehat{Q}$ is also a $\CM$-quasi-deformation such that
$$\gid_Q(M\otimes_RR')=\gid_{\widehat{Q}}(M\otimes_RR'\otimes_Q\widehat{Q})=\gid_{\widehat{Q}}(M\otimes_R\widehat{R'}),$$
and $\gfd_Q(R')=\gfd_{\widehat{Q}}(\widehat{R'})$, where the first equality holds by Remark \ref{gid}. So we can assume in the $\CM$-quasi-deformation $R\rightarrow R'\leftarrow Q$ that, $Q$ is a complete local ring. This shows the equality.
\end{proof}

\begin{prop} \label{f} Let $M$ be a homologically bounded $R$-complex. Then
$$\CMfd_R(M)=\inf\left\{\gfd_Q(M\otimes_RR') -\gfd_Q(R')\bigg| \begin{array}{l} R\rightarrow R'\leftarrow Q\text{ is a
}\\\CM\text{-quasi-deformation}\\\text{such that }Q\text{ is complete} \end{array} \right\}.$$
\end{prop}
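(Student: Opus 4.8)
The plan is to mirror the argument for Proposition \ref{ii}, replacing Gorenstein injective dimension by Gorenstein flat dimension throughout and replacing the injective transfer statement (Remark \ref{gid}) by the analogous fact that Gorenstein flat dimension is preserved under completion. One inequality is immediate: the displayed infimum on the right runs over a subset of all $\CM$-quasi-deformations, namely those with $Q$ complete, so it is bounded below by $\CMfd_R(M)$; thus the left-hand side is at most the right-hand side. For the reverse inequality it suffices to show that every $\CM$-quasi-deformation $R\to R'\leftarrow Q$ can be replaced by one with complete first entry without changing the value $\gfd_Q(M\otimes_RR')-\gfd_Q(R')$.

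To this end I would pass to the completed diagram $R\to\widehat{R'}\leftarrow\widehat{Q}$ and first check that it is again a $\CM$-quasi-deformation, exactly as is done implicitly in Proposition \ref{ii}. The map $R\to\widehat{R'}$ is flat and local, being the composite of the flat maps $R\to R'\to\widehat{R'}$, and $\widehat{Q}\to\widehat{R'}$ is surjective with kernel $\widehat{J}=J\widehat{Q}$, where $J=\ker(Q\to R')$. The ideal $\widehat{J}$ is G-perfect: by Remark \ref{2.3}(2) applied to the flat local map $Q\to\widehat{Q}$ and the finitely generated module $R'=Q/J$ we get $\gd_{\widehat{Q}}(\widehat{R'})=\gd_Q(R')=\grade(J,Q)$, while $\grade(\widehat{J},\widehat{Q})=\grade(J,Q)$ since grade is preserved along the faithfully flat extension $Q\to\widehat{Q}$.

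Next I would record the base-change identities. Since $R'=Q/J$ one has $R'\otimes_Q^{\mathbf{L}}\widehat{Q}\simeq\widehat{R'}$, and then by associativity of the derived tensor product
$$(M\otimes_R^{\mathbf{L}}R')\otimes_Q^{\mathbf{L}}\widehat{Q}\simeq M\otimes_R^{\mathbf{L}}(R'\otimes_Q^{\mathbf{L}}\widehat{Q})\simeq M\otimes_R^{\mathbf{L}}\widehat{R'}.$$
The whole argument then reduces to the single equality $\gfd_Q(X)=\gfd_{\widehat{Q}}(X\otimes_Q^{\mathbf{L}}\widehat{Q})$ for a homologically bounded $Q$-complex $X$, applied once with $X=R'$ to give $\gfd_Q(R')=\gfd_{\widehat{Q}}(\widehat{R'})$ and once with $X=M\otimes_R^{\mathbf{L}}R'$ to give $\gfd_Q(M\otimes_RR')=\gfd_{\widehat{Q}}(M\otimes_R\widehat{R'})$. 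Subtracting shows that the original quasi-deformation and its completion assign $M$ the same value, which yields that $\CMfd_R(M)$ is at least the right-hand side, completing the proof.

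The main obstacle is this flat transfer equality, which plays the role that Remark \ref{gid} plays in the injective case. It holds because $Q\to\widehat{Q}$ is a faithfully flat local homomorphism whose closed fiber $\widehat{Q}/\fm\widehat{Q}$ is a field, hence trivially Gorenstein; under such maps Gorenstein flat dimension both ascends and descends, so finiteness transfers in both directions and the numerical values agree. I would quote the relevant base-change result for $\gfd$ (as in the work of Christensen, Frankild and Holm) rather than reprove it, and take care only to confirm that its hypotheses apply to the completion map for an arbitrary homologically bounded complex. This includes the degenerate case where $\gfd_Q(M\otimes_RR')=\infty$: then the equality forces $\gfd_{\widehat{Q}}(M\otimes_R\widehat{R'})=\infty$ as well, so both quasi-deformations give value $\infty$ and the conclusion still holds.
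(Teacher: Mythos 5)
Your proposal is correct and follows essentially the same route as the paper: the paper's proof simply says to repeat the argument of Proposition \ref{ii}, replacing the injective transfer fact (Remark \ref{gid}) by the statement that Gorenstein flat dimension is preserved under completion (citing \cite[Corollary 8.9]{IS}), which is exactly the key equality $\gfd_Q(X)=\gfd_{\widehat{Q}}(X\otimes_Q^{\mathbf{L}}\widehat{Q})$ you isolate. Your additional verification that the completed diagram is again a $\CM$-quasi-deformation is detail the paper leaves implicit, not a departure in method.
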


\begin{proof} The proof is the same as proof of Proposition \ref{ii}, but here use \cite[Corollary 8.9]{IS} instead of Remark \ref{gid}.
\end{proof}

Let $M$ be homologically bounded $R$-complex. Then Foxby showed that $$\gpd_{\widehat{R}}(M\otimes_R\widehat{R})\leq\gpd_R(M)$$ (see \cite[Ascent table II(b)]{ch09}).

\begin{prop} \label{p} Let $M$ be a homologically bounded $R$-complex. Then
$$\CMpd_R(M)=\inf\left\{\gpd_Q(M\otimes_RR') -\gfd_Q(R')\bigg| \begin{array}{l} R\rightarrow R'\leftarrow Q\text{ is a
}\\\CM\text{-quasi-deformation}\\\text{such that }Q\text{ is complete} \end{array} \right\}.$$
\end{prop}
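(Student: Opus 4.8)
The plan is to mirror the proofs of Propositions \ref{ii} and \ref{f} exactly, reducing an arbitrary $\CM$-quasi-deformation to one whose deformation ring $Q$ is complete, while keeping the two relevant homological invariants unchanged under this reduction. As in those proofs, one inequality is immediate: since every $\CM$-quasi-deformation with $Q$ complete is in particular a $\CM$-quasi-deformation, the infimum over the smaller class (right-hand side) is at least the infimum over all $\CM$-quasi-deformations (left-hand side, $\CMpd_R(M)$).

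For the reverse inequality, I would start with an arbitrary $\CM$-quasi-deformation $R\to R'\leftarrow Q$ and pass to $R\to \widehat{R'}\leftarrow \widehat{Q}$. First I would check that this is again a $\CM$-quasi-deformation with $\widehat{Q}$ complete: the map $R\to\widehat{R'}$ is the semi-completion of a flat local map, hence flat, and $\widehat{Q}\to\widehat{R'}$ is the completion of the $\CM$-deformation $Q\to R'$, so its kernel remains G-perfect (G-perfectness, being detected by the $\grade$ and $\gd$ which are both preserved under the faithfully flat map $Q\to\widehat{Q}$, ascends to the completion). Next I would verify the two key invariance statements $\gpd_Q(M\otimes_RR')=\gpd_{\widehat{Q}}(M\otimes_R\widehat{R'})$ and $\gfd_Q(R')=\gfd_{\widehat{Q}}(\widehat{R'})$, after which the difference $\gpd_Q(M\otimes_RR')-\gfd_Q(R')$ computed along the original quasi-deformation equals the same difference along the complete one, giving $\CMpd_R(M)$ (the left-hand side) $\geq$ the right-hand side.

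The main obstacle is the Gorenstein projective-dimension equality, because Foxby's ascent inequality quoted just before the statement only gives $\gpd_{\widehat{Q}}(M\otimes_R\widehat{R'})\leq\gpd_Q(M\otimes_RR')$ in general, not equality, in contrast to the clean equalities used for $\gid$ (Remark \ref{gid}) and $\gfd$ (\cite[Corollary 8.9]{IS}). I would handle this by appealing to Remark \ref{3.2}(1): $\gpd$ and $\gfd$ are simultaneously finite, so I may split into two cases. If $\gfd_Q(M\otimes_RR')$ is infinite, then so is $\gpd_Q(M\otimes_RR')$, and by the $\gfd$-equality the completed $\gfd$ — hence the completed $\gpd$ — is also infinite, so both differences are $+\infty$ and the comparison is trivial. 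If $\gpd_Q(M\otimes_RR')$ is finite, then $\gpd$ coincides with $\gfd$ on both sides (for finite values the Gorenstein projective and flat dimensions of $M\otimes_RR'$ agree over the complete local ring, and likewise over $Q$), so the already-established $\gfd$-equality upgrades to the desired $\gpd$-equality. Combining these cases yields the result.

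\begin{proof} The left hand side is less than or equal to the right hand side, since the right hand infimum runs over a subclass of all $\CM$-quasi-deformations. For the reverse inequality, let $R\to R'\leftarrow Q$ be a $\CM$-quasi-deformation. As in the proof of Proposition \ref{ii}, the diagram $R\to\widehat{R'}\leftarrow\widehat{Q}$ is again a $\CM$-quasi-deformation with $\widehat{Q}$ complete, and $\gfd_Q(R')=\gfd_{\widehat{Q}}(\widehat{R'})$ by \cite[Corollary 8.9]{IS}. It remains to compare the Gorenstein projective dimensions. If $\gpd_Q(M\otimes_RR')=\infty$, then $\gfd_Q(M\otimes_RR')=\infty$ by Remark \ref{3.2}(1), whence $\gfd_{\widehat{Q}}(M\otimes_R\widehat{R'})=\infty$ by \cite[Corollary 8.9]{IS}, and so $\gpd_{\widehat{Q}}(M\otimes_R\widehat{R'})=\infty$ again by Remark \ref{3.2}(1); both differences are infinite. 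If $\gpd_Q(M\otimes_RR')<\infty$, then $\gpd$ and $\gfd$ agree over both $Q$ and $\widehat{Q}$ on $M\otimes_RR'$ and $M\otimes_R\widehat{R'}$ respectively, so $\gpd_Q(M\otimes_RR')=\gfd_Q(M\otimes_RR')=\gfd_{\widehat{Q}}(M\otimes_R\widehat{R'})=\gpd_{\widehat{Q}}(M\otimes_R\widehat{R'})$. In either case
$$\gpd_Q(M\otimes_RR')-\gfd_Q(R')=\gpd_{\widehat{Q}}(M\otimes_R\widehat{R'})-\gfd_{\widehat{Q}}(\widehat{R'}),$$
so we may assume $Q$ is complete, which proves the equality.
\end{proof}
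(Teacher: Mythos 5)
Your setup is right and matches the paper's strategy (complete the quasi-deformation, compare the two invariants), but the pivotal step of your finite case is a genuine gap: you assert that when $\gpd_Q(M\otimes_RR')<\infty$ the Gorenstein projective and Gorenstein flat dimensions \emph{agree}. Remark \ref{3.2}(1) (via Remark \ref{2.3}(1)) gives only \emph{simultaneous finiteness} of $\gpd$ and $\gfd$, not equality of the finite values, and the equality you need is false for non-finitely generated objects: already over the regular local ring $\mathbb{Z}_{(p)}$ one has $\gfd_{\mathbb{Z}_{(p)}}(\mathbb{Q})=\fd_{\mathbb{Z}_{(p)}}(\mathbb{Q})=0$ while $\gpd_{\mathbb{Z}_{(p)}}(\mathbb{Q})=\pd_{\mathbb{Z}_{(p)}}(\mathbb{Q})=1$, since $\mathbb{Q}$ is flat but not free over $\mathbb{Z}_{(p)}$. (The identity $\gd=\gpd=\gfd$ of Remark \ref{2.3}(2) applies only to finitely generated modules, whereas $M\otimes_RR'$ here is an arbitrary homologically bounded complex; taking the trivial quasi-deformation $R\to R\leftarrow R$ over $R=\mathbb{Z}_{(p)}$ with $M=\mathbb{Q}$ shows your chain $\gpd_Q(M\otimes_RR')=\gfd_Q(M\otimes_RR')=\gfd_{\widehat{Q}}(M\otimes_R\widehat{R'})=\gpd_{\widehat{Q}}(M\otimes_R\widehat{R'})$ cannot be justified this way.) Consequently the displayed equality of the two differences at the end of your proof is unproved.

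The repair is to notice that you never needed that equality. For the direction ``right-hand side $\le\CMpd_R(M)$'' it suffices that the completed quasi-deformation achieve a value \emph{no larger} than the original one, i.e.\ $\gpd_{\widehat{Q}}(M\otimes_R\widehat{R'})-\gfd_{\widehat{Q}}(\widehat{R'})\le\gpd_Q(M\otimes_RR')-\gfd_Q(R')$; taking infima then gives the claim. This one-sided estimate is exactly what Foxby's ascent inequality $\gpd_{\widehat{Q}}(M\otimes_R\widehat{R'})\le\gpd_Q(M\otimes_RR')$ provides --- the comment the paper places immediately before Proposition \ref{p} for precisely this purpose, and which you noted but set aside as ``only an inequality'' --- combined with the equality $\gfd_Q(R')=\gfd_{\widehat{Q}}(\widehat{R'})$ from \cite[Corollary 8.9]{IS}, which is legitimate because $R'$ is a finitely generated $Q$-module. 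Replacing your two-case argument with this one-line estimate closes the gap and recovers the paper's intended proof.
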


\begin{proof} The proof is the same as proof of Proposition \ref{ii}, but here use the comment just before the proposition instead of Remark \ref{gid}.
\end{proof}

A homological dimension should not grow under localization. Let $\fp$ be a prime ideal of $R$ and $M$ a homologically bounded $R$-complex. It is well known that $$\gfd_{R_{\fp }}(M_{\fp})\leq\gfd_R(M),$$ and Foxby showed that (when $R$ has finite Krull dimension) $$\gpd_{R_{\fp }}(M_{\fp})\leq\gpd_R(M)$$ (see \cite[Page 262]{CFH}). On the other hand if $R$ has a dualizing complex then, $$\gid_{R_{\fp }}(M_{\fp})\leq\gid_R(M)$$ by \cite[Proposition 5.5]{CFH}.

\begin{thm}\label{zzz} Let $M$ be a homologically finite $R$-complex. For each prime ideal $\fp\in\Spec(R)$ there is an inequality
$$\CMid_{R_{\fp }}{(M_{\fp})}\le\CMid_R(M).$$
\end{thm}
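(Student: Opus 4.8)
The plan is to reduce to a convenient class of $\CM$-quasi-deformations and then to localize them. By Proposition \ref{ii}, since $M$ is homologically finite, $\CMid_R(M)$ may be computed using only $\CM$-quasi-deformations $R\to R'\leftarrow Q$ in which $Q$ is \emph{complete}. This reduction is exactly what makes the argument work: a complete local ring admits a dualizing complex, which is the hypothesis under which the localization inequality for Gorenstein injective dimension recorded just before the theorem is available. Accordingly, I would fix such a diagram $R\to R'\leftarrow Q$ with $Q$ complete, set $J=\ker(Q\to R')$ (a G-perfect ideal), and aim to produce from it a $\CM$-quasi-deformation of $R_{\fp}$ whose associated value is at most $\gid_Q(M\otimes_RR')-\gfd_Q(R')$. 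Taking the infimum over all such diagrams and invoking Proposition \ref{ii} would then give the desired inequality.

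To build the localized diagram, I would use that $R\to R'$ is flat local, hence faithfully flat, so $\Spec R'\to\Spec R$ is surjective and I may choose $\fp'\in\Spec R'$ contracting to $\fp$; let $\fq$ be its contraction to $Q$ along $Q\to R'$, a prime containing $J$. Localizing yields $R_{\fp}\to R'_{\fp'}\leftarrow Q_{\fq}$, where $R_{\fp}\to R'_{\fp'}$ is again flat local and $Q_{\fq}\to R'_{\fp'}=Q_{\fq}/J_{\fq}$ is surjective. The one point that genuinely needs proof is that $J_{\fq}$ remains G-perfect, so that the localized diagram is a bona fide $\CM$-quasi-deformation. Writing $g=\grade(J,Q)=\gd_Q(Q/J)$, I would argue by a squeeze: G-dimension does not grow under localization, so $\gd_{Q_{\fq}}((Q/J)_{\fq})\le g$; grade only grows under localization, since the defining $\Ext$ modules localize, so $\grade(J_{\fq},Q_{\fq})\ge g$; and for any finitely generated module of finite G-dimension one has $\grade\le\gd$, both being the infimum and supremum of the degrees in which $\Ext^{\bullet}(-,Q_{\fq})$ is nonzero. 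These combine to $g\le\grade(J_{\fq},Q_{\fq})\le\gd_{Q_{\fq}}((Q/J)_{\fq})\le g$, forcing equality throughout. Hence $J_{\fq}$ is G-perfect and, moreover, $\gfd_{Q_{\fq}}(R'_{\fp'})=\gd_{Q_{\fq}}(Q_{\fq}/J_{\fq})=g=\gfd_Q(R')$, so the subtracted term is preserved exactly.

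With the localized $\CM$-quasi-deformation in hand, I would combine two inputs. Identifying $(M\otimes_RR')_{\fp'}$ with $M_{\fp}\otimes_{R_{\fp}}R'_{\fp'}$ by flat base change, and using that $Q$ has a dualizing complex, the localization inequality before the theorem gives $\gid_{Q_{\fq}}((M\otimes_RR')_{\fp'})\le\gid_Q(M\otimes_RR')$. Together with $\gfd_{Q_{\fq}}(R'_{\fp'})=\gfd_Q(R')$ from the previous step, the definition of $\CMid$ yields
$$\CMid_{R_{\fp}}(M_{\fp})\le\gid_{Q_{\fq}}\bigl((M\otimes_RR')_{\fp'}\bigr)-\gfd_{Q_{\fq}}(R'_{\fp'})\le\gid_Q(M\otimes_RR')-\gfd_Q(R').$$
Taking the infimum over all complete $\CM$-quasi-deformations of $R$ and applying Proposition \ref{ii} then gives $\CMid_{R_{\fp}}(M_{\fp})\le\CMid_R(M)$. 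The hard part will be the G-perfectness of $J_{\fq}$ together with the exact preservation of the $\gfd$ term; the squeeze above resolves both at once, while the reduction to complete $Q$ via Proposition \ref{ii} is what secures the dualizing-complex hypothesis needed for the Gorenstein-injective localization step.
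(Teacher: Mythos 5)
Your proposal is correct and follows essentially the same route as the paper's proof: reduce to complete $Q$ via Proposition \ref{ii} so that $Q$ admits a dualizing complex, localize the $\CM$-quasi-deformation at a prime $\fp'$ lying over $\fp$, and apply the Gorenstein-injective localization inequality together with the preservation of $\gfd_Q(R')$. The only difference is that you spell out, via the grade/G-dimension squeeze, why $J_{\fq}$ remains G-perfect and why $\gfd_{Q_{\fq}}(R'_{\fp'})=\gfd_Q(R')$ --- a step the paper asserts without detail --- and your argument for it is sound.
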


\begin{proof} Assume that $\CMid_R(M)<\infty$. Let $R\rightarrow R'\leftarrow Q$ be a $\CM$-quasi-deformation with $Q$ a complete local ring, such that $\gid_Q(M\otimes R')<\infty$ and $\CMid_R(M)=\gid_Q(M\otimes R')-\gfd_Q(R')$ by Proposition \ref{ii}. Hence $Q$ admits a dualizing complex.

Let $\fp$ be a prime ideal of $R$. Since $R\rightarrow
R'$ is a faithfully flat extension of rings, there is a prime ideal
$\fp '$ in $R'$ lying over $\fp$. Let $\fq$ be the inverse image of
$\fp '$ in $Q$. The map $R_{\fp}\rightarrow R'_{\fp '}$ is flat, and
$R'_{\fp'}\leftarrow Q_{\fq}$ is a $\CM$-deformation and note that $\gfd_{Q_{\fq}}(R'_{\fp'})=\gfd_QR'$. Therefore the
diagram $R_{\fp}\rightarrow R'_{\fp'}\leftarrow Q_{\fq}$ is a
$\CM$-quasi-deformation with $$\gid_{Q_{\fq}}(M_{\fp}\otimes_{R_{\fp}} R'_{\fp '})=\gid_{Q_{\fq}}((M\otimes_RR')\otimes_QQ_{\fq})\le\gid_Q(M\otimes_RR')<\infty,$$ where the inequality holds by \cite[Proposition 5.5]{CFH}. Hence $\CMid_{R_{\fp}}(M_{\fp})<\infty$. So
we obtain
\begin{align*}
\CMid_{R_{\fp}}(M_{\fp})\leq & \gid_{Q_{\fq}}(M_{\fp}\otimes_{R_{\fp}} R'_{\fp '})-\gfd_{Q_{\fq}}(R'_{\fp'}) \\[1ex]
                       \le & \gid_Q(M\otimes_RR')-\gfd_Q(R') \\[1ex]
                       = & \CMid_R(M).
\end{align*}
Thus the desired inequality follows.
\end{proof}

We do not know when the inequality $\CMid_{R_{\fp }}{(M_{\fp})}\le\CMid_R(M)$ holds in general. However for $\CMpd_R(M)$ and $\CMfd_R(M)$ we have

\begin{thm}\label{P3} Let $M$ be a homologically bounded $R$-complex. For each prime ideal $\fp\in\Spec(R)$ there is an inequality
\begin{itemize}
  \item[(1)] $\CMpd_{R_{\fp }}{(M_{\fp})}\le\CMpd_R(M)$.
  \item[(2)] $\CMfd_{R_{\fp }}{(M_{\fp})}\le\CMfd_R(M)$.
\end{itemize}
\end{thm}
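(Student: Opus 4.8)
The plan is to mimic the proof of Theorem \ref{zzz} that was just given for $\CMid$, since the structure of the argument should transfer almost verbatim to the $\CMpd$ and $\CMfd$ cases. First I would assume $\CMpd_R(M)<\infty$ (respectively $\CMfd_R(M)<\infty$), as otherwise the inequality is trivial. Using Proposition \ref{p} (respectively Proposition \ref{f}), I would choose a $\CM$-quasi-deformation $R\rightarrow R'\leftarrow Q$ with $Q$ complete that computes the dimension, i.e.\ with $\CMpd_R(M)=\gpd_Q(M\otimes_RR')-\gfd_Q(R')$ and $\gpd_Q(M\otimes_RR')<\infty$. The reason for insisting $Q$ be complete is exactly the same as in Theorem \ref{zzz}: it guarantees that $Q$ admits a dualizing complex (equivalently, has finite Krull dimension), which is the hypothesis under which Foxby's localization inequality $\gpd_{R_{\fp}}(M_{\fp})\le\gpd_R(M)$ was stated in the comment preceding Proposition \ref{p}.

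Next I would localize the quasi-deformation. Given $\fp\in\Spec(R)$, faithful flatness of $R\to R'$ yields a prime $\fp'$ of $R'$ lying over $\fp$; let $\fq$ be its contraction to $Q$. Then $R_{\fp}\to R'_{\fp'}$ is flat and $R'_{\fp'}\leftarrow Q_{\fq}$ is a $\CM$-deformation, so that $R_{\fp}\to R'_{\fp'}\leftarrow Q_{\fq}$ is a $\CM$-quasi-deformation, and as noted in the previous proof $\gfd_{Q_{\fq}}(R'_{\fp'})=\gfd_Q(R')$. The crucial estimate is then
$$\gpd_{Q_{\fq}}(M_{\fp}\otimes_{R_{\fp}}R'_{\fp'})=\gpd_{Q_{\fq}}((M\otimes_RR')\otimes_QQ_{\fq})\le\gpd_Q(M\otimes_RR'),$$
where the displayed inequality is precisely Foxby's localization bound for $\gpd$, applicable because $Q$ has finite Krull dimension. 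The $\CMfd$ case is identical but invokes instead the unconditional inequality $\gfd_{Q_{\fq}}(R'_{\fp'}\text{-module})\le\gfd_Q(-)$, which holds over any ring and so does not even require completeness of $Q$. Combining, I obtain
$$\CMpd_{R_{\fp}}(M_{\fp})\le\gpd_{Q_{\fq}}(M_{\fp}\otimes_{R_{\fp}}R'_{\fp'})-\gfd_{Q_{\fq}}(R'_{\fp'})\le\gpd_Q(M\otimes_RR')-\gfd_Q(R')=\CMpd_R(M),$$
and the analogous chain for $\CMfd$.

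The main obstacle, and the one point where this differs from the injective case, is that the localization inequality for $\gpd$ in \cite{CFH} is only asserted when the base ring has finite Krull dimension, so I must make sure the reduction to complete $Q$ (via Proposition \ref{p}) is genuinely available and that completeness supplies the needed finite-dimensionality. The identity $\gpd_{Q_{\fq}}((M\otimes_RR')\otimes_QQ_{\fq})=\gpd_{Q_{\fq}}(M_{\fp}\otimes_{R_{\fp}}R'_{\fp'})$ is a routine flat base-change/localization compatibility: localizing the $R$-complex $M$ at $\fp$ and then tensoring up to $R'_{\fp'}$ agrees with tensoring $M$ up to $R'$ and localizing at $\fp'$, because $R'_{\fp'}=R'\otimes_QQ_{\fq}$ and localization is exact. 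Once these two bookkeeping facts are in place the two inequalities in (1) and (2) follow exactly as for $\CMid$, with no further complications.
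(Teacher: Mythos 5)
Your argument is correct and follows the same basic strategy as the paper: localize the $\CM$-quasi-deformation at a prime lying over $\fp$ and apply the localization inequalities for $\gpd$ and $\gfd$. The one genuine difference is your insistence on first reducing to the case where $Q$ is complete via Propositions \ref{p} and \ref{f}; the paper explicitly dispenses with this ("here we do not need $Q$ is a complete local ring"), and for good reason. The $\gfd$ localization inequality is unconditional, and the $\gpd$ localization inequality of Foxby requires only that the base ring have finite Krull dimension --- a property that \emph{every} Noetherian local ring $Q$ has automatically, since $\dim Q$ is bounded by the minimal number of generators of its maximal ideal. Completeness was needed in Theorem \ref{zzz} only because \cite[Proposition 5.5]{CFH} for $\gid$ requires a dualizing complex; note also that your parenthetical "(equivalently, has finite Krull dimension)" overstates matters, as admitting a dualizing complex implies finite Krull dimension but not conversely. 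So your extra reduction is harmless and the resulting proof is valid, but it buys nothing here; omitting it gives the cleaner argument the paper intends.
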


\begin{proof} The proof is the same as proof of Theorem \ref{zzz}, but here we do not need $Q$ is a complete local ring.
\end{proof}

\begin{prop} \label{iart} Let $M$ be a homologically finite $R$-complex. Then there is an equality
$$\CMid_R(M)=\inf\left\{\gid_Q(M\otimes_RR') -\gfd_Q(R')\bigg| \begin{array}{l} R\rightarrow R'\leftarrow Q\text{ is a
}\\\CM\text{-quasi-deformation}\\\text{such that the closed fibre}\\\text{of }R\to R'\text{ is Artinian} \end{array} \right\}.$$
\end{prop}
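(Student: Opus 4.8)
The plan is to prove the equality by showing that both infima have the same value, namely $\depth R-\inf(M)$. One inequality is immediate: every $\CM$-quasi-deformation whose closed fibre is Artinian is in particular a $\CM$-quasi-deformation, so the right-hand infimum runs over a subclass of the defining one and hence $\CMid_R(M)$ is at most the right-hand side. If $\CMid_R(M)=\infty$ there is nothing more to prove, so I may assume $\CMid_R(M)<\infty$ and must produce, from an arbitrary $\CM$-quasi-deformation of finite value, one with Artinian closed fibre of no larger value.

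First I would reduce all the bookkeeping to a single formula. Let $R\to R'\leftarrow Q$ be a $\CM$-quasi-deformation with $\gid_Q(M\otimes_RR')<\infty$; by Proposition \ref{ii} I may take $Q$ complete. Since $R'$ is a nonzero finitely generated $Q$-module of finite G-dimension, $\gfd_Q(R')=\gd_Q(R')=\depth Q-\depth R'$ by the Auslander–Buchsbaum formula, while the Bass-type formula for Gorenstein injective dimension (see Remark \ref{gid}) gives $\gid_Q(M\otimes_RR')=\depth Q-\inf(M\otimes_RR')=\depth Q-\inf(M)$, the last step because $R\to R'$ is faithfully flat. Hence the value of this quasi-deformation is
\[
\gid_Q(M\otimes_RR')-\gfd_Q(R')=\depth R'-\inf(M)=\depth R+\depth(R'/\fm R')-\inf(M),
\]
using the flat base-change formula for depth. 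In particular every value is at least $\depth R-\inf(M)$, with equality exactly when the closed fibre $R'/\fm R'$ has depth $0$; and an Artinian fibre certainly has depth $0$.

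The heart of the matter is thus a construction that cuts the closed fibre down to an Artinian one without enlarging the value. Given $R\to R'\leftarrow Q$ as above, I would choose elements of $\fm_{R'}$ whose images form a system of parameters of the fibre $R'/\fm R'$, lift them to a sequence $\underline y$ in $\fm_Q$, and pass to $R''=R'/\underline yR'$ and $Q''=Q/(\underline y)$. One then verifies that $R\to R''$ is again flat with $R''/\fm R''$ Artinian, that $Q''\to R''$ is a $\CM$-deformation (its kernel stays G-perfect because one cuts by a sequence regular on both $Q$ and $R'$), and that the same formula computes the new value as $\depth R''-\inf(M)=\depth R-\inf(M)$. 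Taking infima then identifies the right-hand side with $\depth R-\inf(M)=\CMid_R(M)$.

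The \emph{main obstacle} is precisely the flatness of $R\to R''$. By the local criterion of flatness, $R\to R'/\underline yR'$ is flat if and only if the lifted system of parameters is a regular sequence on $R'/\fm R'$; this holds when the fibre is Cohen–Macaulay but fails in general, since a system of parameters of a non–Cohen–Macaulay local ring is not regular, and once a maximal fibre-regular sequence has been cut the remaining fibre has depth $0$ yet still positive dimension, admitting no further regular element. The real work is therefore to arrange a Cohen–Macaulay closed fibre before cutting. I expect to do this by replacing the semi-completion $R\to\widehat{R'}$ by a Cohen factorization $R\to\widetilde R\to\widehat{R'}$ in which $R\to\widetilde R$ is flat with regular (hence Cohen–Macaulay) closed fibre, transporting the deformation $\widehat{Q}\to\widehat{R'}$ along it, and only then cutting a system of parameters of the now Cohen–Macaulay fibre; checking that this transport preserves both G-perfectness and the finiteness of Gorenstein injective dimension is the delicate step on which the argument turns.
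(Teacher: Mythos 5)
Your reduction of the bookkeeping is correct: for any $\CM$-quasi-deformation $R\to R'\leftarrow Q$ with $\gid_Q(M\otimes_RR')<\infty$ the value is indeed $\depth R'-\inf(M)=\depth R+\depth(R'/\fm R')-\inf(M)$, so everything hinges on producing, from one such quasi-deformation, another with Artinian closed fibre and still finite Gorenstein injective dimension. But that existence step --- the heart of the matter, as you say yourself --- is not actually carried out. Your cutting-down construction fails for the reason you identify (a system of parameters of a non--Cohen--Macaulay fibre is not fibre-regular, so $R\to R'/\underline yR'$ need not be flat), and the proposed repair via a Cohen factorization $R\to\widetilde R\to\widehat{R'}$ is left as a sketch: it is not explained how to ``transport'' the deformation $\widehat Q\to\widehat{R'}$ to a $\CM$-deformation over the new flat base $\widetilde R$, nor why G-perfectness of the kernel and finiteness of $\gid$ survive the subsequent cutting (the latter would itself require a nontrivial result on Gorenstein injective dimension modulo a regular sequence). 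As written, the proof is incomplete.

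The paper avoids all of this with a one-step localization. Starting from a quasi-deformation with $Q$ complete (Proposition \ref{ii}), so that $Q$ admits a dualizing complex, one picks $\fp'\in\Spec(R')$ minimal over $\fm R'$ and its preimage $\fq$ in $Q$. Then $R\to R'_{\fp'}\leftarrow Q_{\fq}$ is again a $\CM$-quasi-deformation, its closed fibre $(R'/\fm R')_{\fp'}$ is Artinian by minimality, $\gfd_{Q_\fq}(R'_{\fp'})=\gfd_Q(R')$, and $\gid_{Q_\fq}((M\otimes_RR')\otimes_QQ_\fq)\le\gid_Q(M\otimes_RR')$ by the localization inequality for $\gid$ over rings with a dualizing complex \cite[Proposition 5.5]{CFH}. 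This immediately gives the missing inequality. You would do well to replace your construction by this localization argument; note that it is exactly here that the completeness of $Q$ arranged in Proposition \ref{ii} earns its keep.
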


\begin{proof}
It is clear that the left hand side is less than or equal to the right hand side. Let $R\rightarrow R'\leftarrow Q$ be a $\CM$-quasi-deformation with $Q$ a complete local ring, such that $\CMid_R(M)=\gid_Q(M\otimes R')-\gfd_Q(R')$ by Proposition \ref{ii}. Hence $Q$ admits a dualizing complex. Now
choose $\fp'\in\Spec(R')$ such that it is a minimal prime ideal
containing $\fm R'$; thus $\fm=\fp'\cap R$ and $\fp'=\fq/J$ for some
$\fq\in\Spec(Q)$, where $J=\ker(Q\rightarrow R')$. Now the diagram
$R\rightarrow R'_{\fp'}\leftarrow Q_{\fq}$ is a
$\CM$-quasi-deformation such that the closed fiber of $R\rightarrow R'_{\fp'}$ is Artinian.  It is clear that $\gfd_Q
R'=\gfd_{Q_{\fq}} R'_{\fp'}$.  Also we have
\begin{align*}
\gid_{Q_{\fq}} (M\otimes_R R'_{\fp'})=&\gid_{Q_{\fq}} (M\otimes_R (R'\otimes_Q Q_{\fq})) \\[1ex]
                       =&\gid_{Q_{\fq}} ((M\otimes_RR')\otimes_Q Q_{\fq}) \\[1ex]
                       \le &\gid_Q(M\otimes_RR'),
\end{align*}
where the inequality holds by \cite[Proposition 5.5]{CFH}.
Hence
$\gid_{Q_{\fq}} (M\otimes_R R'_{\fp})-\gfd_{Q_{\fq}}(R'_{\fp})\le\CMid_R(M)$. So the proof is complete.
\end{proof}

\begin{prop} \label{art} Let $M$ be a homologically bounded $R$-complex. Then there are equalities
$$\CMpd_R(M)=\inf\left\{\gpd_Q(M\otimes_RR') -\gfd_Q(R')\bigg| \begin{array}{l} R\rightarrow R'\leftarrow Q\text{ is a
}\\\CM\text{-quasi-deformation}\\\text{such that the closed fibre}\\\text{of }R\to R'\text{ is Artinian} \end{array} \right\},$$
$$\CMfd_R(M)=\inf\left\{\gfd_Q(M\otimes_RR') -\gfd_Q(R')\bigg| \begin{array}{l} R\rightarrow R'\leftarrow Q\text{ is a
}\\\CM\text{-quasi-deformation}\\\text{such that the closed fibre}\\\text{of }R\to R'\text{ is Artinian} \end{array} \right\}.$$
\end{prop}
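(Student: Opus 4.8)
The plan is to mimic the proof of Proposition \ref{iart}, using Propositions \ref{p} and \ref{f} in place of Proposition \ref{ii}. In both displayed equalities, that the left-hand side is at most the right-hand side is automatic, since the right-hand infimum runs over a subfamily of all $\CM$-quasi-deformations. If the left-hand side is infinite there is nothing left to prove, so I would assume it finite and manufacture, out of an optimal quasi-deformation, one with Artinian closed fibre whose value is no larger.

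First I would use Proposition \ref{p} (resp.\ Proposition \ref{f}) to pick a $\CM$-quasi-deformation $R\to R'\leftarrow Q$ with $Q$ complete realizing $\CMpd_R(M)=\gpd_Q(M\otimes_RR')-\gfd_Q(R')$ (resp.\ the flat analogue); the infimum is attained because the invariants are integers. Exactly as in Proposition \ref{iart}, I would choose $\fp'\in\Spec(R')$ minimal over $\fm R'$, so that $\fp'\cap R=\fm$ and $\fp'=\fq/J$ with $J=\ker(Q\to R')\subseteq\fq$. Then $R\to R'_{\fp'}\leftarrow Q_{\fq}$ is again a $\CM$-quasi-deformation, and its closed fibre $(R'/\fm R')_{\fp'}$, being the localization of $R'/\fm R'$ at a minimal prime, is Artinian.

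Two facts then finish the argument. For the numerators, the localization inequalities recorded just before Theorem \ref{zzz} give $\gpd_{Q_{\fq}}((M\otimes_RR')\otimes_QQ_{\fq})\le\gpd_Q(M\otimes_RR')$ and $\gfd_{Q_{\fq}}((M\otimes_RR')\otimes_QQ_{\fq})\le\gfd_Q(M\otimes_RR')$, the first because the complete ring $Q$ has finite Krull dimension and the second unconditionally; since $(M\otimes_RR')\otimes_QQ_{\fq}\cong M\otimes_RR'_{\fp'}$ this bounds the localized numerator. For the denominators I would prove $\gfd_Q(R')=\gfd_{Q_{\fq}}(R'_{\fp'})$: as $R'=Q/J$ is finitely generated, $\gfd_Q(R')=\gd_Q(R')=\grade(J,Q)$ by Remark \ref{2.3}(2) and G-perfectness, and localizing at $\fq\supseteq J$ yields
$$\gd_{Q_{\fq}}(R'_{\fp'})\le\gd_Q(R')=\grade(J,Q)\le\grade(J_{\fq},Q_{\fq})\le\gd_{Q_{\fq}}(R'_{\fp'}),$$
forcing equality throughout (and confirming that $J_{\fq}$ is G-perfect, so the localized diagram is genuinely a $\CM$-deformation). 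Combining these gives $\gpd_{Q_{\fq}}(M\otimes_RR'_{\fp'})-\gfd_{Q_{\fq}}(R'_{\fp'})\le\CMpd_R(M)$, and likewise for $\gfd$, so the right-hand infimum is at most the left-hand side and both equalities follow.

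The only genuinely non-formal point is the denominator equality $\gfd_Q(R')=\gfd_{Q_{\fq}}(R'_{\fp'})$, which Proposition \ref{iart} disposes of as ``clear''; the grade/G-dimension squeeze above is what justifies it, and it is precisely where G-perfectness of $J$ is used. The remaining care is that the $\gpd$ localization inequality is only available because completeness of $Q$ guarantees finite Krull dimension, which is exactly why Propositions \ref{p} and \ref{f} are invoked to reduce to complete $Q$ at the outset.
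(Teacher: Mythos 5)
Your argument is correct and follows the same route as the paper: both reduce to a quasi-deformation realizing the (integer-valued, hence attained) infimum, localize $R'$ at a prime $\fp'$ minimal over $\fm R'$ and $Q$ at its preimage $\fq$, and then control the numerator by the localization inequalities for $\gpd$ and $\gfd$ and the denominator by $\gfd_Q(R')=\gfd_{Q_{\fq}}(R'_{\fp'})$. The one place you diverge is the opening reduction to complete $Q$ via Propositions \ref{p} and \ref{f}, and the reason you give for it is off: every Noetherian local ring has finite Krull dimension (bounded by the minimal number of generators of the maximal ideal), so Foxby's localization inequality for $\gpd$ applies to any local $Q$ without completing. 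Indeed the paper's proof of this proposition explicitly notes that, unlike the injective case of Proposition \ref{iart} (where completeness is invoked to get a dualizing complex for \cite[Proposition 5.5]{CFH}), no completeness is needed here. Your detour is harmless --- the localized diagram still has the required form --- but it adds nothing. On the plus side, your grade/G-dimension squeeze
$$\gd_{Q_{\fq}}(R'_{\fp'})\le\gd_Q(R')=\grade(J,Q)\le\grade(J_{\fq},Q_{\fq})\le\gd_{Q_{\fq}}(R'_{\fp'})$$
correctly justifies the denominator equality (and the G-perfectness of $J_{\fq}$), a point the paper's proof of Proposition \ref{iart} dismisses as ``clear.''
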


\begin{proof} The proof is the same as proof of Proposition \ref{iart}, but here we do not need $Q$ is a complete local ring.
\end{proof}

\begin{rem} \label{iar}{\em (1) Let $M$ be a homologically finite $R$-complex. Then, one can combine the proofs of Propositions \ref{ii} and \ref{iart}, to obtain an equality
$$\CMid_R(M)=\inf\left\{\gid_Q(M\otimes_RR') -\gfd_Q(R')\bigg| \begin{array}{l} R\rightarrow R'\leftarrow Q\text{ is a
}\\\CM\text{-quasi-deformation}\\\text{such that }Q\text{ is complete}\\\text{and the closed fibre of}\\R\to R'\text{ is Artinian} \end{array} \right\}.$$
(2) Likewise for a homologically bounded $R$-complex $M$, one can combine the proofs of Propositions \ref{f}, \ref{p} and \ref{iart}, to obtain the equalities
$$\CMfd_R(M)=\inf\left\{\gfd_Q(M\otimes_RR') -\gfd_Q(R')\bigg| \begin{array}{l} R\rightarrow R'\leftarrow Q\text{ is a
}\\\CM\text{-quasi-deformation}\\\text{such that }Q\text{ is complete}\\\text{and the closed fibre of}\\R\to R'\text{ is Artinian} \end{array} \right\},$$
$$\CMpd_R(M)=\inf\left\{\gpd_Q(M\otimes_RR') -\gfd_Q(R')\bigg| \begin{array}{l} R\rightarrow R'\leftarrow Q\text{ is a
}\\\CM\text{-quasi-deformation}\\\text{such that }Q\text{ is complete}\\\text{and the closed fibre of}\\R\to R'\text{ is Artinian} \end{array} \right\}.$$
}
\end{rem}

\section{Large restricted flat dimension and Chouinard's invariant}

Recall from \cite{CFF}, that the \emph{large restricted flat dimension} is defined by
$$\rfd_R(M):=\sup\{\sup(F\otimes_R^{\mathbf{L}} M)\mid F\text{ an }R\text{-module with }\fd_R(F)<\infty \}.$$ This number is finite, as long as $\H(M)$ is nonzero and the Krull dimension of $R$ is finite; see \cite[Proposition 2.2]{CFF}. It is useful to keep in mind an alternative formula
\cite[Theorem 2.4]{CFF} for computing this invariant:
$$\rfd_R(M)=\sup\{\depth R_{\fp} - \depth_{R_{\fp}}(M_{\fp})\mid \fp\in\Spec(R)\}.$$ Recall here that the \emph{depth} of a homologically bounded $R$-complex $M$ is defined by $$\depth_R(M)=-\sup(\mathbf{R}\Hom_R(k,M)),$$ and it is shown that $\depth_R(M)\ge-\sup(M)$.

It is proved in \cite[Theorem 8.8]{IS} that for an $R$-complex $M$,
$\rfd_R(M)$ is a refinement of $\gfd_R(M)$, that is
$$\rfd_R(M)\le\gfd_R(M),$$ with equality if $\gfd_R(M)$ is finite.

First, we plan to show that, when the Cohen-Macaulay flat dimension of a homologically bounded $R$-complex $M$ is finite, then it is equal to the large restricted flat dimension of $M$. The following proposition is the main tool.

\begin{prop} \label{R1} Let $R\rightarrow S\leftarrow Q$ be a $\CM$-quasi-deformation,
and let $M$ be a homologically bounded $R$-complex. Then
$$\rfd_R(M)=\rfd_Q (M\otimes_R S)-\rfd_Q(S).$$
\end{prop}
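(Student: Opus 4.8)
The plan is to push everything through the depth formula $\rfd_R(M)=\sup\{\depth R_{\fp}-\depth_{R_{\fp}}(M_{\fp})\mid\fp\in\Spec(R)\}$ of \cite[Theorem 2.4]{CFF}, transferred along the two maps of the quasi-deformation. Write $J=\ker(Q\to S)$ and $g=\grade(J,Q)=\gd_Q(S)$, and set $N=M\otimes_RS$ (an $S$-complex; note $M\otimes_RS=M\otimes_R^{\mathbf L}S$ since $R\to S$ is flat). For a prime $\fQ\in\Spec(Q)$ with $\fQ\supseteq J$ put $\fp'=\fQ/J\in\Spec(S)$ and $\fp=\fp'\cap R\in\Spec(R)$. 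Because $N$ and $S$ are supported inside $\V(J)$, only such $\fQ$ contribute to the depth formulas for $\rfd_Q(N)$ and $\rfd_Q(S)$.

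First I would record the two depth-formula summands. As $S_{\fp'}=Q_{\fQ}/JQ_{\fQ}$ is a quotient ring, depth is insensitive to the change of rings, so $\depth_{Q_{\fQ}}(N_{\fQ})=\depth_{S_{\fp'}}(N_{\fp'})$ and $\depth_{Q_{\fQ}}(S_{\fQ})=\depth S_{\fp'}$. Since $R_{\fp}\to S_{\fp'}$ is flat local with closed fibre $S_{\fp'}/\fp S_{\fp'}$ and $N_{\fp'}=M_{\fp}\otimes_{R_{\fp}}S_{\fp'}$, the flat base-change formula for depth gives $\depth_{S_{\fp'}}(N_{\fp'})=\depth_{R_{\fp}}(M_{\fp})+\depth(S_{\fp'}/\fp S_{\fp'})$ and $\depth S_{\fp'}=\depth R_{\fp}+\depth(S_{\fp'}/\fp S_{\fp'})$. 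Subtracting, the fibre term cancels and I obtain the identity
\[
\bigl[\depth Q_{\fQ}-\depth_{Q_{\fQ}}(N_{\fQ})\bigr]-\bigl[\depth Q_{\fQ}-\depth_{Q_{\fQ}}(S_{\fQ})\bigr]=\depth R_{\fp}-\depth_{R_{\fp}}(M_{\fp}).
\]

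The heart of the argument is to show that the second bracket is the constant $g$ for every $\fQ\in\V(J)$; this is exactly what will let the supremum of a sum split. Here I would exploit G-perfectness: since $\grade(J,Q)=g$ we have $\Ext^i_Q(Q/J,Q)=0$ for $i<g$, and localizing (Ext commutes with localization for the finitely generated module $Q/J$) gives $\grade(JQ_{\fQ},Q_{\fQ})\ge g$. On the other hand $\gd_{Q_{\fQ}}(S_{\fp'})\le\gd_Q(S)=g$ because G-dimension does not grow under localization, while always $\grade\le\gd$ for a module of finite G-dimension. The squeeze $g\le\grade_{Q_{\fQ}}(S_{\fp'})\le\gd_{Q_{\fQ}}(S_{\fp'})\le g$ forces $\gd_{Q_{\fQ}}(S_{\fp'})=g$, and then the Auslander--Bridger formula yields $\depth Q_{\fQ}-\depth S_{\fp'}=g$. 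Thus the second bracket equals $g$ identically on $\V(J)$, and in particular $\rfd_Q(S)=g$.

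Finally I would assemble the pieces. The displayed identity now reads $\depth Q_{\fQ}-\depth_{Q_{\fQ}}(N_{\fQ})=g+\bigl(\depth R_{\fp}-\depth_{R_{\fp}}M_{\fp}\bigr)$ for all $\fQ\in\V(J)$. Taking suprema and using that $R\to S$ is faithfully flat local, so that $\Spec(S)\to\Spec(R)$ — hence $\{(\fQ/J)\cap R\mid\fQ\in\V(J)\}$ — is all of $\Spec(R)$, the depth formula over $Q$ gives $\rfd_Q(N)=g+\rfd_R(M)=\rfd_Q(S)+\rfd_R(M)$, which rearranges to the assertion. The main obstacle is precisely the constancy of $\depth Q_{\fQ}-\depth S_{\fp'}$ over $\V(J)$ (the local invariance of the grade coming from G-perfectness); the remaining steps are routine flat base change and the surjectivity of $\Spec(S)\to\Spec(R)$. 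I would also dispose of the degenerate case $M\simeq0$ separately (both sides being $-\infty$), noting that faithful flatness gives $N_{\fp'}\simeq0\iff M_{\fp}\simeq0$, so the prime-by-prime identity remains consistent.
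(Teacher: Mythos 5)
Your proof is correct, and it rests on the same two pillars as the paper's argument: the depth-formula description of $\rfd$ from \cite[Theorem 2.4]{CFF}, and the fact that G-perfectness of $J$ forces $\gd_{Q_{\fq}}(S_{\fp'})=\gd_Q(S)$ after localization --- your grade/G-dimension squeeze is precisely the content of the paper's briefly justified ``G-perfectness'' equality, with Auslander--Bridger then converting it into a depth statement. Where you genuinely diverge is in the assembly. The paper establishes two global identities separately: $\rfd_S(Y)=\rfd_Q(Y)-\gd_Q(S)$, proved by a one-sided estimate at a single extremal prime combined with the subadditivity $\rfd_Q(Y)\le\rfd_S(Y)+\rfd_Q(S)$ of \cite[Proposition 3.5]{SY2}, and $\rfd_S(M\otimes_RS)=\rfd_R(M)$, quoted from \cite[Lemma 8.5(1)]{IS}; it then composes the two. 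You instead prove a single pointwise identity $\depth Q_{\fQ}-\depth_{Q_{\fQ}}(N_{\fQ})=g+\depth R_{\fp}-\depth_{R_{\fp}}(M_{\fp})$ for every $\fQ\in\V(J)$ and let the supremum split because the correction term $g$ is constant on all of $\V(J)$, with surjectivity of $\Spec(S)\to\Spec(R)$ identifying the resulting supremum with $\rfd_R(M)$. What this buys is self-containment: both external citations disappear (the flat base-change invariance of $\rfd$ is reproved inline via the depth formula for flat local maps), at the modest cost of verifying the constancy of $\gd_{Q_{\fQ}}(S_{\fp'})$ at every prime of $\V(J)$ rather than only at one extremal prime. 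Your explicit treatment of the degenerate primes where $M_{\fp}\simeq0$ (both sides of the pointwise identity being $-\infty$, and faithful flatness matching the supports) is needed to keep the supremum manipulation honest, and you handle it correctly.
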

\begin{proof} First we prove the equality $$\rfd_S(Y)=
\rfd_Q(Y)-\gd_Q(S),$$ for a homologically bounded $S$-complex $Y$. To this end, choose by \cite[Theorem 2.4(b)]{CFF} a prime ideal $\fp$ of $S$ such that the first
equality below holds. Let $\fq$ be the inverse image of $\fp$ in
$Q$. Therefore there is an isomorphism $Y_{\fp}\cong Y_{\fq}$ of
$Q_{\fq}$-modules and a $\CM$-deformation $Q_{\fq}\rightarrow
S_{\fp}$.  Hence
\begin{align*}
\rfd_S(Y)= & \depth S_{\fp}-\depth_{S_{\fp}}Y_{\fp} \\[1ex]
        = & \depth_{Q_{\fq}} S_{\fp}-\depth_{Q_{\fq}}Y_{\fp} \\[1ex]
        = & \depth Q_{\fq}-\gd_{Q_{\fq}} S_{\fp}-\depth_{Q_{\fq}}Y_{\fp} \\[1ex]
        \le & \rfd_Q(Y)-\gd_{Q_{\fq}}(S_{\fp}) \\[1ex]
        = & \rfd_Q(Y)-\gd_Q(S).
\end{align*}
The second equality holds since $Q_{\fq}\rightarrow S_{\fp}$ is
surjective and \cite[Proposition 5.2(1)]{i99}; the third equality holds by Auslander-Bridger formula
\cite{AB}; the fourth equality is due to the
$\mbox{G}$-perfectness assumption of $S$ over $Q$; while the
inequality follows from \cite[Theorem 2.4(b)]{CFF}. Now by
\cite[Proposition 3.5]{SY2} we have
$$\rfd_Q(Y)\le\rfd_S(Y)+\rfd_Q(S)\le\rfd_Q(Y)-\gd_Q(S)+\rfd_Q(S)=\rfd_Q(Y),$$
which is the desired equality.

Now we have
\begin{align*}
\rfd_Q (M\otimes_R S)\le & \rfd_S (M\otimes_R S)+\rfd_Q(S) \\[1ex]
                     = & \rfd_S (M\otimes_R S)+\gd_Q(S) \\[1ex]
                     = & \rfd_Q (M\otimes_R S),
\end{align*}
where the inequality is in \cite[Proposition 3.5]{SY2}, the first
equality follows from the hypotheses, and the second equality follows from the
above observation.  Hence $$\rfd_Q (M\otimes_R S)-\rfd_Q(S)=\rfd_S(M\otimes_R S)=\rfd_R(M)$$ where the second equality holds by
\cite[Lemma 8.5(1)]{IS}.
\end{proof}

\begin{cor}\label{rfdcmfd} Let $M$ be a homologically bounded $R$-complex. Then we have the inequalities $$\rfd_R(M)\leq\CMfd_R(M)\leq\gfd_R(M),$$ with equality to the left of any finite value. In particular if $\CMfd_R(M)<\infty$, then
\begin{align*}
\CMfd_R(M)=&\sup\{\depth R_{\fp} - \depth_{R_{\fp}}(M_{\fp})\mid \fp\in\Spec(R)\} \\[1ex]
          \le & \dim R+\sup(M).
\end{align*}
\end{cor}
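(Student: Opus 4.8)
The plan is to read everything off Proposition \ref{R1}, combined with the fact recalled just before the corollary (from \cite{IS}) that $\rfd_R$ refines $\gfd_R$, with equality whenever $\gfd_R$ is finite. Throughout I would assume $\H(M)\neq0$, the case $M\simeq0$ being trivial; recall that then $\rfd_R(M)$ is finite, since $R$ is Noetherian local and hence of finite Krull dimension.

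First I would establish the left-hand inequality $\rfd_R(M)\le\CMfd_R(M)$. Fix any $\CM$-quasi-deformation $R\to S\leftarrow Q$. Since $S=Q/J$ with $J$ a G-perfect ideal, $\gfd_Q(S)$ is finite (for finitely generated modules $\gfd$ agrees with $\gd$, and $\gd_Q(S)=\grade_Q(S)<\infty$), so the refinement property gives $\rfd_Q(S)=\gfd_Q(S)$; in general $\rfd_Q(M\otimes_R S)\le\gfd_Q(M\otimes_R S)$. Feeding these into Proposition \ref{R1} yields
$$\rfd_R(M)=\rfd_Q(M\otimes_R S)-\rfd_Q(S)\le\gfd_Q(M\otimes_R S)-\gfd_Q(S).$$
Taking the infimum over all $\CM$-quasi-deformations gives $\rfd_R(M)\le\CMfd_R(M)$. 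The right-hand inequality $\CMfd_R(M)\le\gfd_R(M)$ is already Remark \ref{3.2}(2).

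Next I would treat the two equality claims. If $\gfd_R(M)<\infty$, then $\rfd_R(M)=\gfd_R(M)$ by the refinement property, and the sandwich $\rfd_R(M)\le\CMfd_R(M)\le\gfd_R(M)$ forces all three to coincide. If instead $\CMfd_R(M)<\infty$, I would first observe that the defining infimum is attained: the finite values $\gfd_Q(M\otimes_R S)-\gfd_Q(S)$ are integers (as $R\to S$ is faithfully flat, $M\otimes_R S\not\simeq0$), and by the first step they are all bounded below by the finite number $\rfd_R(M)$, so this nonempty set of integers has a least element. Choosing a $\CM$-quasi-deformation realizing $\CMfd_R(M)=\gfd_Q(M\otimes_R S)-\gfd_Q(S)$, the term $\gfd_Q(M\otimes_R S)$ is finite, so the refinement property gives $\rfd_Q(M\otimes_R S)=\gfd_Q(M\otimes_R S)$, and again $\rfd_Q(S)=\gfd_Q(S)$. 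Proposition \ref{R1} then reads
$$\rfd_R(M)=\rfd_Q(M\otimes_R S)-\rfd_Q(S)=\gfd_Q(M\otimes_R S)-\gfd_Q(S)=\CMfd_R(M).$$

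Finally, when $\CMfd_R(M)<\infty$ I would substitute the alternative formula for $\rfd_R(M)$ quoted from \cite{CFF} to obtain $\CMfd_R(M)=\sup\{\depth R_\fp-\depth_{R_\fp}(M_\fp)\mid\fp\in\Spec(R)\}$, and then bound each term separately: $\depth R_\fp\le\dim R_\fp\le\dim R$, while $\depth_{R_\fp}(M_\fp)\ge-\sup(M_\fp)\ge-\sup(M)$ gives $-\depth_{R_\fp}(M_\fp)\le\sup(M)$. Hence every term is at most $\dim R+\sup(M)$, and so is the supremum. I expect the only point needing genuine care to be the attainment of the infimum defining $\CMfd_R(M)$: the equality $\rfd_R(M)=\CMfd_R(M)$ depends on invoking the refinement property at an \emph{actual} optimal quasi-deformation, so one must argue that this integer-valued, bounded-below infimum is in fact a minimum rather than merely an infimum approached along a sequence.
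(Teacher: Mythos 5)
Your proposal is correct and follows exactly the route the paper intends: the corollary is stated without separate proof precisely because it falls out of Proposition \ref{R1} together with the refinement property $\rfd\le\gfd$ (with equality at finite values) from \cite{IS} and the formula of \cite{CFF}, which is what you use. The only remark is that your worry about attainment of the infimum is unnecessary: your own computation shows that \emph{every} finite value $\gfd_Q(M\otimes_RS)-\gfd_Q(S)$ occurring in the defining set already equals $\rfd_R(M)$, so the infimum, when finite, coincides with $\rfd_R(M)$ without needing to be realized by an optimal quasi-deformation.
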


Now using Corollary \ref{rfdcmfd}, we investigate the effect of change of ring
on Cohen-Macaulay flat dimension.

\begin{prop}\label{P1} Let $M$ be a homologically bounded $R$-complex. Let $R\rightarrow R'$ be a local flat extension, and $M'=M\otimes_R
R'$. Then $$\CMfd_R(M)\le\CMfd_{R'}(M')$$ with equality when $\CMfd_{R'}(M')$ is finite.
\end{prop}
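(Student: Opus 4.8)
The plan is to prove the inequality directly from the definition of $\CMfd$ and then upgrade it to an equality in the finite case by passing through the large restricted flat dimension via Corollary \ref{rfdcmfd}.

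For the inequality $\CMfd_R(M)\le\CMfd_{R'}(M')$, I would start with an arbitrary $\CM$-quasi-deformation $R'\to S\leftarrow Q$ of $R'$ and transport it along $R\to R'$. Since $R\to R'$ and $R'\to S$ are both flat local homomorphisms, their composition $R\to S$ is again a flat local extension, so $R\to S\leftarrow Q$ is a $\CM$-quasi-deformation of $R$ (the deformation $Q\to S$ is left untouched, hence still has G-perfect kernel). The base-change isomorphism $M\otimes_R S\simeq (M\otimes_R R')\otimes_{R'}S = M'\otimes_{R'}S$ in $\mathcal{D}(Q)$ then yields $\gfd_Q(M\otimes_R S)=\gfd_Q(M'\otimes_{R'}S)$, so the value $\gfd_Q(M\otimes_R S)-\gfd_Q(S)$ computed over the induced quasi-deformation of $R$ equals $\gfd_Q(M'\otimes_{R'}S)-\gfd_Q(S)$, the value attached to the original quasi-deformation of $R'$. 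Because $\CMfd_R(M)$ is an infimum over a class of quasi-deformations containing all these induced ones, taking the infimum over all $\CM$-quasi-deformations of $R'$ gives $\CMfd_R(M)\le\CMfd_{R'}(M')$.

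For the equality, suppose $\CMfd_{R'}(M')<\infty$. The inequality just established forces $\CMfd_R(M)<\infty$ as well, so both Cohen-Macaulay flat dimensions are finite and Corollary \ref{rfdcmfd} applies on each side, giving $\CMfd_R(M)=\rfd_R(M)$ and $\CMfd_{R'}(M')=\rfd_{R'}(M')$. It then remains to see that the large restricted flat dimension is invariant under the flat base change, namely $\rfd_{R'}(M')=\rfd_R(M)$. This is exactly \cite[Lemma 8.5(1)]{IS} applied to the flat local extension $R\to R'$ and the complex $M$ (the same descent statement already invoked in the proof of Proposition \ref{R1}). Chaining the three equalities produces $\CMfd_R(M)=\CMfd_{R'}(M')$.

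The only point requiring care is the reduction to the $\rfd$-invariance: one must first confirm that \emph{both} $\CMfd$ values are finite before invoking Corollary \ref{rfdcmfd}, which is precisely why the equality is claimed only when $\CMfd_{R'}(M')$ is finite. The remaining ingredients, namely closure of flat local homomorphisms under composition and the standard associativity of the derived tensor product, are routine. I therefore expect no genuine obstacle here, since the substantive flat-descent statement for $\rfd$ is supplied by the cited literature; the task is simply to organize the reduction so that the flat change of rings on $\CMfd$ is converted into the known flat change of rings on $\rfd$.
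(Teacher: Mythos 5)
Your proposal is correct and follows essentially the same route as the paper: transport a $\CM$-quasi-deformation of $R'$ to one of $R$ by composing the flat maps, observe $M\otimes_RS\simeq M'\otimes_{R'}S$ to get the inequality (and hence finiteness of $\CMfd_R(M)$ when $\CMfd_{R'}(M')<\infty$), then conclude equality via the chain $\CMfd_R(M)=\rfd_R(M)=\rfd_{R'}(M')=\CMfd_{R'}(M')$ using Corollary \ref{rfdcmfd} and \cite[Lemma 8.5(1)]{IS}. The only cosmetic difference is that you establish the inequality for all quasi-deformations at once, whereas the paper works only with one realizing finiteness; the substance is identical.
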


\begin{proof} Suppose that $\CMfd_{R'}(M')<\infty$, and let $R'\rightarrow
R''\leftarrow Q$ be a $\CM$-quasi-deformation with $\gfd_Q(M'\otimes_{R'}R'')<\infty$. Since $R\rightarrow R'$
and $R'\rightarrow R''$ are flat extensions, the local homomorphism
$R\rightarrow R''$ is also flat. Hence $R\rightarrow R''\leftarrow
Q$ is a $\CM$-quasi-deformation with $\gfd_Q (M\otimes_R
R'')<\infty$. It follows that $\CMfd_R(M)$ is finite. Now by Corollary \ref{rfdcmfd} and \cite[Lemma 8.5(1)]{IS}, we have
$$\CMfd_R(M)=\rfd_R(M)=\rfd_{R'}(M')=\CMfd_{R'}(M'),$$
to complete the proof.
\end{proof}

\begin{prop}\label{P2}
For every homologically bounded $R$-complex $M$
$$\CMfd_R(M)=\CMfd_{\widehat{R}}(M\otimes_R \widehat{R}).$$
\end{prop}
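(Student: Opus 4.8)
The plan is to combine Proposition \ref{P1} with an ascent argument for finiteness. Since the completion map $R\to\widehat{R}$ is a flat local extension, Proposition \ref{P1} (applied with $R'=\widehat{R}$ and $M'=M\otimes_R\widehat{R}$) already gives
$$\CMfd_R(M)\le\CMfd_{\widehat{R}}(M\otimes_R\widehat{R}),$$
with equality whenever the right-hand side is finite. Hence the only thing left to prove is that the two finiteness conditions agree: the displayed inequality shows that $\CMfd_{\widehat{R}}(M\otimes_R\widehat{R})<\infty$ forces $\CMfd_R(M)<\infty$, and I must establish the converse. Once this is in hand the desired equality follows in every case: if the two invariants are finite it is Proposition \ref{P1}, and if they are infinite it is automatic.

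For the converse, suppose $\CMfd_R(M)<\infty$. By Proposition \ref{f} I may choose a $\CM$-quasi-deformation $R\to R'\leftarrow Q$ in which $Q$ is complete and $\gfd_Q(M\otimes_R R')<\infty$. The key observation is that $R'$ is then automatically complete, being the quotient $Q/J$ of the complete local ring $Q$. Consequently the completion of the flat local homomorphism $R\to R'$ is, by the very definition of the completion of a homomorphism recorded in Section 2, a local homomorphism $\widehat{R}\to\widehat{R'}=R'$. Granting that this map is flat (discussed below), the diagram $\widehat{R}\to R'\leftarrow Q$ is again a $\CM$-quasi-deformation whose right-hand half $R'\leftarrow Q$ is literally the one we started with; in particular $Q\to R'$ is still a $\CM$-deformation. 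Since
$$(M\otimes_R\widehat{R})\otimes_{\widehat{R}}R'\simeq M\otimes_R R',$$
we obtain $\gfd_Q\big((M\otimes_R\widehat{R})\otimes_{\widehat{R}}R'\big)=\gfd_Q(M\otimes_R R')<\infty$, and therefore $\CMfd_{\widehat{R}}(M\otimes_R\widehat{R})<\infty$, as required.

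The one technical point is the flatness of $\widehat{R}\to R'$, which I expect to be the main (if mild) obstacle: it is the standard fact that the completion of a flat local homomorphism of Noetherian local rings is again flat. It can be verified through the local criterion of flatness, by transferring a free $R$-resolution of $k$ along the flat map $R\to\widehat{R}$ and using $k\otimes_R\widehat{R}\cong k$ to get $\Tor_1^{\widehat{R}}(k,R')\cong\Tor_1^R(k,R')$, which vanishes because $R'$ is flat over $R$. An alternative that sidesteps this fact entirely is to run the same construction starting instead from a $\CM$-quasi-deformation with Artinian closed fibre, as furnished by Proposition \ref{art}: then $\widehat{R'}=R'\otimes_R\widehat{R}$ and $\widehat{R'}=R'\otimes_Q\widehat{Q}$, so flatness of $\widehat{R}\to\widehat{R'}$ is mere base change, G-perfectness of $\ker(\widehat{Q}\to\widehat{R'})$ ascends along the faithfully flat map $Q\to\widehat{Q}$, and the finiteness $\gfd_{\widehat{Q}}\big((M\otimes_R R')\otimes_Q\widehat{Q}\big)=\gfd_Q(M\otimes_R R')<\infty$ is exactly \cite[Corollary 8.9]{IS}, just as in the proof of Proposition \ref{f}.
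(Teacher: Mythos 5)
Your proof is correct and follows essentially the same route as the paper: both reduce via Proposition \ref{P1} to showing that finiteness of $\CMfd_R(M)$ ascends to $\widehat{R}$, and both do so by transporting a witnessing $\CM$-quasi-deformation up to the completion. Your primary variant (pre-completing $Q$ via Proposition \ref{f} so that $R'$ is already complete and only the flatness of $\widehat{R}\to R'$ needs checking) is a slightly cleaner packaging, while your fallback via Proposition \ref{art} and \cite[Corollary 8.9]{IS} is essentially the paper's own argument, which completes the whole diagram to $\widehat{R}\to\widehat{R'}\leftarrow\widehat{Q}$.
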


\begin{proof} If $\CMfd_R(M)=\infty$, then we obtain that
$\CMfd_{\widehat{R}}(M\otimes_R \widehat{R})=\infty$ by Proposition \ref{P1}.
Now assume that $\CMfd_R(M)<\infty$. Using Proposition \ref{P1}, it is sufficient to prove
that $\CMfd_{\widehat{R}}(M\otimes_R \widehat{R})$ is finite. To this end, choose a $\CM$-quasi-deformation $R\rightarrow R'\leftarrow Q$ of
$R$ such that $\gfd_Q(M\otimes_RR')<\infty$. So we have $\widehat{R}\rightarrow \widehat{R'}\leftarrow
\widehat{Q}$ is a $\CM$-quasi-deformation of $\widehat{R}$ with
respect to their maximal ideal-adic completions. Now using \cite[Corollary 8.9]{IS} we obtain
$$
\gfd_{\widehat{Q}} ((M\otimes_R
\widehat{R})\otimes_{\widehat{R}} \widehat{R'})=\gfd_Q (M\otimes_R R')<\infty.
$$
Hence $\gfd_{\widehat{Q}} ((M\otimes_R
\widehat{R})\otimes_{\widehat{R}} \widehat{R'})$ is finite which in turn
implies that $\CMfd_{\widehat{R}}(M\otimes_R \widehat{R})$ is
finite.
\end{proof}

Next, recall that the \emph{width} of an $R$-complex $M$ is defined by $$\width_R(M)=\inf(M\otimes_R^{\mathbf{L}}k),$$ and that $\width_R(M)\ge\inf(M)$. Also, if $M$ is homologically finite, then $$\width_R(M)=\inf(M).$$ It is the
dual notion for $\depth_R(M)$. In particular by \cite[Proposition 4.8]{CFF}, we
have $$\width_R(M)= \depth_R(\mathbf{R}\Hom_R(M,\mbox{E}_R(k))),$$ where $\mbox{E}_R(k)$ denotes
the injective envelope of $k$ over $R$.

The \emph{Chouinard invariant} \cite[Corollary 3.1]{C} is denoted by
$\mbox{Ch}_R(M)$ and
$$\mbox{Ch}_R(M):=\sup \{\depth R_\fp - \width _{R_{\fp}}(M_\fp) | \fp\in \Spec(R)\}.$$

It is proved in \cite[Theorem 2.2]{CS10} that for an $R$-complex $M$,
$\mbox{Ch}_R(M)$ is a refinement of $\gid_R(M)$, that is
$$\mbox{Ch}_R(M)\le\gid_R(M),$$ with equality if $\gid_R(M)$ is finite. Now we want to show that the Cohen-Macaulay injective dimension is bounded below by the Chouinard's invariant.

\begin{lem} \label{CH2} Suppose that $Q\rightarrow S$ is a surjective local homomorphism and $Y$ is an
$S$-complex. Then we have
$$\width_S(Y)=\width_Q(Y).$$
\end{lem}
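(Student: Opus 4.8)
The plan is to deduce the width equality from the corresponding statement for depth. The paper has just recorded the formula $\width_R(M)=\depth_R(\uhom_R(M,\E_R(k)))$, and in the proof of Proposition~\ref{R1} the invariance of depth under a surjective local homomorphism, namely \cite[Proposition 5.2(1)]{i99}, was already used. Since $Q\lo S$ is surjective and local, the two rings share a common residue field $k$. The strategy is to produce a single complex whose depth over $Q$ computes $\width_Q(Y)$ and whose depth over $S$ computes $\width_S(Y)$, and then to invoke the depth invariance.

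The key step is the change-of-rings isomorphism
$$\uhom_Q(Y,\E_Q(k))\simeq\uhom_S(Y,\E_S(k))$$
of $S$-complexes. To establish it, note first that $\E_Q(k)$ is injective, so $\uhom_Q(S,\E_Q(k))=\Hom_Q(S,\E_Q(k))$, and by Matlis theory this $S$-module is precisely the injective envelope $\E_S(k)$. The adjunction between restriction of scalars along $Q\lo S$ and the coinduction functor $\Hom_Q(S,-)$ then gives
$$\uhom_S(Y,\Hom_Q(S,\E_Q(k)))\simeq\uhom_Q(Y,\E_Q(k)),$$
which is the displayed isomorphism. As $Y$ is homologically bounded and $\Hom_Q(-,\E_Q(k))$ is exact, this common complex, call it $X$, is homologically bounded.

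With $X:=\uhom_Q(Y,\E_Q(k))$ in hand, the width formula yields $\width_Q(Y)=\depth_Q(X)$ and, through the isomorphism above, $\width_S(Y)=\depth_S(X)$. Because $Q\lo S$ is surjective, \cite[Proposition 5.2(1)]{i99} applied to the $S$-complex $X$ gives $\depth_Q(X)=\depth_S(X)$, and combining the three equalities delivers $\width_Q(Y)=\width_S(Y)$.

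I expect the main obstacle to be the bookkeeping of module structures rather than any deep input: one must check that the adjunction isomorphism is genuinely an isomorphism of $S$-complexes whose underlying $Q$-structure is the restriction of the $S$-structure along $Q\lo S$, so that the single complex $X$ legitimately represents both Matlis duals and both depths are computed for the same object. As an alternative route, one could argue directly from $\width_Q(Y)=\inf(Y\otimes_Q^{\mathbf{L}}k)=\inf(Y\otimes_S^{\mathbf{L}}(S\otimes_Q^{\mathbf{L}}k))$ together with the facts that $S\otimes_Q^{\mathbf{L}}k$ has infimum $0$ with $\H_0=k$; but tracking the infimum through the truncation spectral sequence of $S\otimes_Q^{\mathbf{L}}k$ is messier than the reduction to depth above.
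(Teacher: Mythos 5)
Your proposal is correct and follows essentially the same route as the paper: both pass to Matlis duals via $\width = \depth\circ\mathbf{R}\Hom(-,\E(k))$, identify $\E_S(k)$ with $\Hom_Q(S,\E_Q(k))$, use Hom-tensor adjunction to equate the two duals, and finish with the invariance of depth along the surjection from \cite[Proposition 5.2(1)]{i99}. The only cosmetic difference is that you package the middle steps as a single change-of-rings isomorphism before taking depths, whereas the paper writes the same chain of equalities inline.
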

\begin{proof} We have the following equalities:
\begin{align*}
\width_S(Y)= & \depth_S\mathbf{R}\Hom_S(Y,\mbox{E}_S(k)) \\[1ex]
         = & \depth_S\mathbf{R}\Hom_S(Y,\Hom_Q(S,\mbox{E}_Q(k))) \\[1ex]
         = & \depth_S\mathbf{R}\Hom_Q(Y,\mbox{E}_Q(k)) \\[1ex]
         = & \depth_Q\mathbf{R}\Hom_Q(Y,\mbox{E}_Q(k)) \\[1ex]
         = & \width_Q(Y),
\end{align*}
where the first one is by \cite[Proposition 4.8]{CFF}; the second one is by
\cite[Lemma 10.1.15]{BS}; the third one is by adjointness of $\Hom$
and tensor; the fourth one is true since $Q\rightarrow S$ is
surjective and \cite[Proposition 5.2(1)]{i99}; while the last one is again by \cite[Proposition 4.8]{CFF}. Here
we used $k$ for the residue fields of $Q$ and $S$, and $\mbox{E}_Q(k)$
and $\mbox{E}_S(k)$ for the injective envelopes of $k$ over respectively
$Q$ and $S$.
\end{proof}

\begin{lem} \label{CH41} Suppose that $R\rightarrow S$ is a flat local ring homomorphism, and $M$ is a homologically bounded
$R$-complex. Then we have
$$\width_S(M\otimes_RS)=\width_R(M).$$
\end{lem}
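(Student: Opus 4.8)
The plan is to compute both sides directly from the definition $\width_R(M)=\inf(M\utp_R k)$, exploiting the fact that flatness collapses the ordinary tensor product into the derived one. First I would observe that since $R\to S$ is flat, $M\otimes_R S$ already represents the derived tensor product, so $M\otimes_R S\simeq M\utp_R S$ in $\mathcal{D}(S)$; in particular $M\otimes_R S$ is again homologically bounded and its width is defined.

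The heart of the argument is a single application of associativity of the derived tensor product. Writing $l$ for the residue field of $S$, I would compute
$$\width_S(M\otimes_R S)=\inf\big((M\utp_R S)\utp_S l\big)=\inf\big(M\utp_R(S\utp_S l)\big)=\inf(M\utp_R l).$$
Next, because $R\to S$ is a local homomorphism, $\fm$ maps into $\fn$, so the $R$-module structure on $l$ factors through $k=R/\fm$. This lets me rewrite $l\cong k\otimes_k l$ as $R$-modules and pull out the scalars:
$$M\utp_R l\simeq (M\utp_R k)\otimes_k l.$$

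Finally, since $k$ is a field and $l$ is a field extension of $k$, the module $l$ is faithfully flat over $k$, whence $\H_n(M\utp_R l)\cong\H_n(M\utp_R k)\otimes_k l$ is nonzero precisely when $\H_n(M\utp_R k)$ is nonzero. Taking infima over $n$ yields $\inf(M\utp_R l)=\inf(M\utp_R k)=\width_R(M)$, which together with the displayed chain of equalities gives the claim. The only points requiring care---and which I expect to be the main, if modest, obstacle---are the verification that the $R$-action on $l$ genuinely factors through $k$, and the faithful flatness of the field extension $k\to l$ that guarantees no homology is lost; both are routine once the reductions are in place. An alternative route, fully parallel to Lemma \ref{CH2}, would instead invoke the depth formula $\width_R(M)=\depth_R(\mathbf{R}\Hom_R(M,\mbox{E}_R(k)))$ together with flat base change for depth, but the direct tensor computation above is the most economical.
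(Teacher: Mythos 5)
Your argument is correct and is exactly the ``standard application of the K\"{u}nneth formula'' that the paper invokes in its one-line proof: flatness of $R\to S$ and associativity reduce $\width_S(M\otimes_RS)$ to $\inf(M\otimes^{\mathbf{L}}_R l)$, the local hypothesis makes $l$ a nonzero $k$-vector space, and K\"{u}nneth (i.e.\ faithful flatness of $l$ over $k$) gives $\inf(M\otimes^{\mathbf{L}}_R l)=\inf(M\otimes^{\mathbf{L}}_R k)=\width_R(M)$. No gaps; your write-up simply supplies the details the paper leaves implicit.
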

\begin{proof} A standard application of the K\"{u}nneth formula yields the equality.
\end{proof}

\begin{prop} \label{CH4} Let $R\rightarrow S$ be a flat local homomorphism and let $M$ be a homologically bounded
$R$-complex. Then
$$\mbox{Ch}_R(M)\le \mbox{Ch}_S(M\otimes_RS).$$
\end{prop}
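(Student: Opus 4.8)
The plan is to reduce this global inequality to a pointwise comparison over $\Spec(R)$, exploiting the formula $\mbox{Ch}_R(M)=\sup\{\depth R_\fp-\width_{R_\fp}(M_\fp)\mid\fp\in\Spec(R)\}$. First I would fix a prime $\fp\in\Spec(R)$. Since a flat local homomorphism is automatically faithfully flat, the induced map on spectra is surjective, so there exists $\fq\in\Spec(S)$ with $\fq\cap R=\fp$. Then $R_\fp\to S_\fq$ is again a flat local homomorphism, and localizing the (flat, hence underived) base change gives an identification $(M\otimes_RS)_\fq\cong M_\fp\otimes_{R_\fp}S_\fq$.

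Next I would handle the width and the depth in the $\fp$-th term separately. For the width, Lemma \ref{CH41} applied to the flat local map $R_\fp\to S_\fq$ and the homologically bounded complex $M_\fp$ yields
$$\width_{S_\fq}((M\otimes_RS)_\fq)=\width_{S_\fq}(M_\fp\otimes_{R_\fp}S_\fq)=\width_{R_\fp}(M_\fp).$$
For the depth, the standard depth formula for a flat local homomorphism $R_\fp\to S_\fq$ (see \cite[Proposition 1.2.16]{BH}) gives $\depth S_\fq=\depth R_\fp+\depth(S_\fq/\fp S_\fq)$; since the closed fibre is a nonzero local ring, its depth is nonnegative, so $\depth S_\fq\ge\depth R_\fp$.

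Combining these two facts, for the chosen $\fq$ lying over $\fp$ I obtain
$$\depth R_\fp-\width_{R_\fp}(M_\fp)\le\depth S_\fq-\width_{S_\fq}((M\otimes_RS)_\fq)\le\mbox{Ch}_S(M\otimes_RS),$$
and taking the supremum over all $\fp\in\Spec(R)$ delivers $\mbox{Ch}_R(M)\le\mbox{Ch}_S(M\otimes_RS)$.

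The only genuine input beyond Lemma \ref{CH41} is the flat-local depth formula, so the points to check carefully are that the condition $\fq\cap R=\fp$ really makes $R_\fp\to S_\fq$ a \emph{local} homomorphism (which is exactly what guarantees both the depth formula and the hypotheses of Lemma \ref{CH41}), and that $(M\otimes_RS)_\fq$ is indeed the base change $M_\fp\otimes_{R_\fp}S_\fq$. I do not anticipate a serious obstacle here; the entire content lies in organizing the pointwise estimate so that each prime of $R$ is dominated by a suitable prime of $S$ lying over it.
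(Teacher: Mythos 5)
Your proof is correct and follows essentially the same route as the paper: compare the suprema defining $\mbox{Ch}$ pointwise by lifting each $\fp\in\Spec(R)$ to a prime $\fq$ of $S$ over it, identify $(M\otimes_RS)_\fq$ with $M_\fp\otimes_{R_\fp}S_\fq$, apply Lemma \ref{CH41} to handle the width, and compare depths along the flat local map $R_\fp\to S_\fq$. The only (harmless) difference is that the paper chooses $\fq$ minimal over $\fp S$ so that the closed fibre is Artinian and $\depth S_\fq=\depth R_\fp$ holds exactly, whereas you take an arbitrary $\fq$ lying over $\fp$ and settle for $\depth S_\fq\ge\depth R_\fp$ from the flat-local depth formula, which is all the inequality requires.
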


\begin{proof} Let $\fp\in\Spec(R)$ such that $\mbox{Ch}_R(M)= \depth R_{\fp}-\width_{R_{\fp}}(M_{\fp})$. Let
$\fq\in\Spec(S)$ contain $\fp S$ minimally. Since $R\rightarrow S$
is a flat local homomorphism we have $\fp=\fq\cap R$. Hence:
\begin{align*}
\mbox{Ch}_R(M)= & \depth R_{\fp}-\width_{R_{\fp}}(M_{\fp}) \\[1ex]
               = & \depth S_{\fq}-\width_{S_{\fq}}(M_{\fp}\otimes_{R_{\fp}}S_{\fq}) \\[1ex]
               = & \depth S_{\fq}-\width_{S_{\fq}}(M\otimes_RS)_{\fq} \\[1ex]
               \le & \mbox{Ch}_S(M\otimes_RS),
\end{align*}
in which the second equality holds by Lemma \ref{CH41} and the fact
that $R_{\fp}\rightarrow S_{\fq}$ has Artinian closed fibre.
\end{proof}

\begin{prop} \label{CH5} Let $Q\rightarrow S$ be a $\CM$-deformation,
and $Y$ be a homologically bounded $S$-complex. Then
$$\mbox{Ch}_S(Y)\le\mbox{Ch}_Q(Y)-\gfd_Q(S).$$
\end{prop}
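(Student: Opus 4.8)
The plan is to mimic the first half of the proof of Proposition \ref{R1}, replacing depth by width and the large restricted flat dimension by the Chouinard invariant. Since only one inequality is needed here (in contrast to \ref{R1}, where an equality was established in two passes), the argument is one-directional and hence somewhat shorter. First I would use the definition of the Chouinard invariant to select a prime ideal $\fp\in\Spec(S)$ at which the supremum is attained, so that $\mbox{Ch}_S(Y)=\depth S_{\fp}-\width_{S_{\fp}}(Y_{\fp})$. Let $\fq$ be the contraction of $\fp$ to $Q$. Because $Q\to S$ is surjective, the induced map $Q_{\fq}\to S_{\fp}$ is again a surjective local homomorphism whose kernel remains G-perfect, so $Q_{\fq}\to S_{\fp}$ is a $\CM$-deformation; moreover restriction of scalars identifies $Y_{\fp}$ with $Y_{\fq}$ as a $Q_{\fq}$-complex.

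Next I would treat the depth term and the width term separately. For the depth term, surjectivity of $Q_{\fq}\to S_{\fp}$ gives $\depth S_{\fp}=\depth_{Q_{\fq}}S_{\fp}$ by \cite[Proposition 5.2(1)]{i99}; the Auslander--Bridger formula yields $\depth_{Q_{\fq}}S_{\fp}=\depth Q_{\fq}-\gd_{Q_{\fq}}S_{\fp}$; and the G-perfectness assumption gives $\gd_{Q_{\fq}}S_{\fp}=\gd_Q S$, exactly as in Proposition \ref{R1}. For the width term, I would invoke Lemma \ref{CH2} for the surjection $Q_{\fq}\to S_{\fp}$ to obtain $\width_{S_{\fp}}(Y_{\fp})=\width_{Q_{\fq}}(Y_{\fq})$. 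Combining these gives $\mbox{Ch}_S(Y)=\depth Q_{\fq}-\gd_Q S-\width_{Q_{\fq}}(Y_{\fq})=\bigl(\depth Q_{\fq}-\width_{Q_{\fq}}(Y_{\fq})\bigr)-\gd_Q S$.

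Finally I would bound the bracketed quantity from above by $\mbox{Ch}_Q(Y)$, since $\fq$ is one of the primes over which the defining supremum of $\mbox{Ch}_Q(Y)$ ranges, and rewrite $\gd_Q S=\gfd_Q S$ via Remark \ref{2.3}(2), which applies because $S=Q/J$ is a finitely generated $Q$-module. This yields the desired inequality $\mbox{Ch}_S(Y)\le\mbox{Ch}_Q(Y)-\gfd_Q S$. The individual steps are routine once this scaffolding is set up; the conceptual input carrying the weight of the argument is the invariance of width under a surjective local homomorphism recorded in Lemma \ref{CH2}, which here plays the role that the depth change-of-rings formula plays in Proposition \ref{R1}. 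The one point I would verify with care is that G-perfectness passes to the localization $Q_{\fq}$, so that the equality $\gd_{Q_{\fq}}S_{\fp}=\gd_Q S$ genuinely holds for the chosen $\fq$; this is the same localization fact already used in the proof of Proposition \ref{R1}.
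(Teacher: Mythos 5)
Your proposal is correct and follows essentially the same route as the paper: choose a prime $\fp$ of $S$ attaining the supremum, pass to the $\CM$-deformation $Q_{\fq}\to S_{\fp}$, convert depth via \cite[Proposition 5.2(1)]{i99} and the Auslander--Bridger formula, convert width via Lemma \ref{CH2}, and use G-perfectness to identify $\gfd_{Q_{\fq}}(S_{\fp})$ with $\gfd_Q(S)$. The only cosmetic difference is that you carry $\gd_Q S$ through the computation and convert to $\gfd_Q S$ at the end via Remark \ref{2.3}(2), whereas the paper writes $\gfd$ throughout.
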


\begin{proof} Choose a prime ideal $\fp$ of $S$ such that the first
equality below holds. Let $\fq$ be the inverse image of $\fp$ in
$Q$. Therefore there is an isomorphism $Y_{\fp}\cong Y_{\fq}$ of
$Q_{\fq}$-complexes and a $\CM$-deformation $Q_{\fq}\rightarrow
S_{\fp}$.  Hence
\begin{align*}
\mbox{Ch}_S(Y)= & \depth S_{\fp}-\width_{S_{\fp}}(Y_{\fp}) \\[1ex]
        = & \depth_{Q_{\fq}} S_{\fp}-\width_{Q_{\fq}}(Y_{\fp}) \\[1ex]
        = & \depth Q_{\fq}-\gfd_{Q_{\fq}} S_{\fp}-\width_{Q_{\fq}}(Y_{\fp}) \\[1ex]
        \le & \mbox{Ch}_Q(Y)-\gfd_{Q_{\fq}}(S_{\fp}) \\[1ex]
        = & \mbox{Ch}_Q(Y)-\gfd_Q(S).
\end{align*}
The second equality holds since $Q_{\fq}\rightarrow S_{\fp}$ is
surjective; the third equality holds by Auslander-Bridger formula
\cite{AB}; the fourth equality is due to the
$\mbox{G}$-perfectness assumption of $S$ over $Q$.
\end{proof}

\begin{thm} \label{T} Let $M$ be a homologically bounded $R$-complex. Then there is the
inequality
$$\mbox{Ch}_R(M)\le\CMid_R(M).$$
\end{thm}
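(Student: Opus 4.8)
The plan is to show that the desired inequality holds term-by-term over the family of $\CM$-quasi-deformations defining $\CMid_R(M)$, and then pass to the infimum. Concretely, I would fix an arbitrary $\CM$-quasi-deformation $R\rightarrow R'\leftarrow Q$ and aim to establish
$$\mbox{Ch}_R(M)\le \gid_Q(M\otimes_RR')-\gfd_Q(R').$$
Once this is in hand for every such diagram, taking the infimum of the right-hand side yields $\mbox{Ch}_R(M)\le\CMid_R(M)$ directly from Definition \ref{d}. Note that no separate finiteness hypothesis on $\CMid_R(M)$ is required: if the right-hand side is $\infty$ for all quasi-deformations the bound is vacuous, and otherwise it is governed by the displayed per-diagram inequality.

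The core is a short chain of three inequalities already prepared by the preceding results. Writing $S=R'$ and treating $M\otimes_RR'$ as an $S$-complex, I would first apply Proposition \ref{CH4} to the flat local homomorphism $R\rightarrow R'$ to get $\mbox{Ch}_R(M)\le\mbox{Ch}_{R'}(M\otimes_RR')$. Next, viewing $M\otimes_RR'$ by restriction of scalars along the $\CM$-deformation $Q\rightarrow R'$, I would apply Proposition \ref{CH5} to obtain
$$\mbox{Ch}_{R'}(M\otimes_RR')\le\mbox{Ch}_Q(M\otimes_RR')-\gfd_Q(R').$$
Finally I would invoke the refinement property $\mbox{Ch}_Q(-)\le\gid_Q(-)$ from \cite[Theorem 2.2]{CS10}, applied to the $Q$-complex $M\otimes_RR'$, giving $\mbox{Ch}_Q(M\otimes_RR')\le\gid_Q(M\otimes_RR')$. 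Concatenating these three steps produces exactly the per-diagram inequality above.

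I do not anticipate a genuine obstacle here, since Propositions \ref{CH4} and \ref{CH5} were evidently engineered precisely to make this telescoping work; the only points requiring care are bookkeeping ones. First, one must keep straight that $\gid_Q(M\otimes_RR')$ and $\mbox{Ch}_Q(M\otimes_RR')$ are computed over $Q$ via restriction along $Q\rightarrow R'$, so that Proposition \ref{CH5} and the refinement of \cite{CS10} apply to the same $Q$-complex. Second, one should confirm that Proposition \ref{CH5} uses $\gfd_Q(R')$ (rather than $\gd_Q(R')$) as the correction term, which matches the shape of $\CMid_R(M)$ in Definition \ref{d}; this is consistent with the G-perfectness of $R'$ over $Q$, under which the Gorenstein flat dimension agrees with the G-dimension $\gd_Q(R')=\grade_Q(R')$. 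With these identifications settled, the argument closes immediately.
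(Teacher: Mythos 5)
Your proposal is correct and follows essentially the same route as the paper: the same telescoping chain through Propositions \ref{CH4} and \ref{CH5} together with the refinement $\mbox{Ch}_Q(-)\le\gid_Q(-)$ from \cite[Theorem 2.2]{CS10}. The only cosmetic difference is that you prove the per-diagram inequality for every $\CM$-quasi-deformation and then take the infimum, whereas the paper assumes $\CMid_R(M)<\infty$, picks a diagram attaining the infimum, and uses the equality form of the CS10 result; both are valid.
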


\begin{proof} We can assume that $\CMid_R(M)<\infty$. Choose a $\CM$-quasi-deformation
$R\rightarrow R'\leftarrow Q$, such that $\CMid_R(M)=\gid_Q
(M\otimes_R R')-\gfd_Q(R')$. Hence we have
\begin{align*}
\CMid_R(M)= & \gid_Q (M\otimes_R R')-\gfd_Q(R') \\[1ex]
        = & \mbox{Ch}_Q(M\otimes_R R')-\gfd_Q(R') \\[1ex]
        \ge & \mbox{Ch}_{R'}(M\otimes_R R')\geq \mbox{Ch}_R(M),
\end{align*}
in which the second equality comes by \cite[Theorem 2.2]{CS10}, and inequalities follow
Propositions \ref{CH5} and \ref{CH4} respectively.
\end{proof}

\begin{cor}\label{CH1} Let $M$ be a homologically bounded $R$-complex. Then there are
inequalities
$$\mbox{Ch}_R(M)\le\CMid_R(M)\le\gid_R(M),$$ such that if $\gid_R(M)<\infty$, then $\gid_R(M)=\CMid_R(M)$.
\end{cor}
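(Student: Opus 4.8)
The plan is to prove Corollary \ref{CH1} by assembling the two chains of inequality that have already been established in the excerpt, and then identifying when the middle term coincides with the outer terms. The statement has three parts: the lower bound $\mbox{Ch}_R(M)\le\CMid_R(M)$, the upper bound $\CMid_R(M)\le\gid_R(M)$, and the equality $\gid_R(M)=\CMid_R(M)$ when $\gid_R(M)<\infty$.

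First I would dispose of the lower bound: it is exactly Theorem \ref{T}, which gives $\mbox{Ch}_R(M)\le\CMid_R(M)$ for any homologically bounded $R$-complex $M$, so nothing further is needed there. Next I would recall the upper bound $\CMid_R(M)\le\gid_R(M)$, which is precisely Remark \ref{3.2}(2), obtained by evaluating the defining infimum at the trivial $\CM$-quasi-deformation $R\rightarrow R\leftarrow R$; indeed at that deformation $\gid_Q(M\otimes_RR')=\gid_R(M)$ and $\gfd_Q(R')=\gfd_R(R)=0$. These two facts immediately yield the displayed chain $\mbox{Ch}_R(M)\le\CMid_R(M)\le\gid_R(M)$.

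The remaining point is the equality when $\gid_R(M)$ is finite. Here I would use the refinement result $\mbox{Ch}_R(M)\le\gid_R(M)$ of \cite[Theorem 2.2]{CS10}, which moreover asserts equality $\mbox{Ch}_R(M)=\gid_R(M)$ precisely when $\gid_R(M)<\infty$. Thus if $\gid_R(M)<\infty$, then the two outer terms of the chain collapse to a common value $\mbox{Ch}_R(M)=\gid_R(M)$, and since $\CMid_R(M)$ is squeezed between them it must agree with both; in particular $\CMid_R(M)=\gid_R(M)$. I expect no real obstacle in this argument: the corollary is essentially a formal squeeze, with all the analytic content already packaged into Theorem \ref{T} and the cited refinement theorem of Christensen and Sather-Wagstaff. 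The only step requiring slight care is invoking the equality clause (not merely the inequality) of \cite[Theorem 2.2]{CS10} to force both outer terms to coincide, which is what makes the squeeze effective.

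Concretely, the write-up would read: the inequality $\mbox{Ch}_R(M)\le\CMid_R(M)$ is Theorem \ref{T}, and $\CMid_R(M)\le\gid_R(M)$ is Remark \ref{3.2}(2); when $\gid_R(M)<\infty$ we have $\mbox{Ch}_R(M)=\gid_R(M)$ by \cite[Theorem 2.2]{CS10}, whence the chain forces $\gid_R(M)=\CMid_R(M)$.
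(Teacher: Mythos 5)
Your proposal is correct and follows exactly the paper's own argument: the two inequalities come from Theorem \ref{T} and Remark \ref{3.2}(2), and the equality case is the squeeze forced by the equality clause of \cite[Theorem 2.2]{CS10} when $\gid_R(M)<\infty$. Nothing is missing.
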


\begin{proof} The inequalities hold by Theorem \ref{T} and Remark \ref{3.2}(2). And if $\gid_R(M)<\infty$, then the equality holds by
\cite[Theorem 2.2]{CS10}.
\end{proof}

\begin{cor}  Let $M$ be a homologically finite $R$-complex such that $\CMid_R(M)$ is finite. Then
\begin{align*}
\CMid_R(M)= & \mbox{Ch}_R(M)=\depth R-\inf(M) \\[1ex]
        \le & \dim R-\inf(M).
\end{align*}

\end{cor}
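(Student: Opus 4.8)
The plan is to pin all three quantities to the single value $\depth R-\inf(M)$ by bounding $\CMid_R(M)$ from above by $\depth R-\inf(M)$ and bounding $\mbox{Ch}_R(M)$ from below by the same number. Since Theorem \ref{T} already supplies $\mbox{Ch}_R(M)\le\CMid_R(M)$, these two bounds close the loop into
$$\depth R-\inf(M)\le\mbox{Ch}_R(M)\le\CMid_R(M)\le\depth R-\inf(M),$$
forcing equality throughout, after which $\depth R\le\dim R$ yields the final inequality. The lower bound on $\mbox{Ch}_R(M)$ is immediate: because $M$ is homologically finite we have $\width_R(M)=\inf(M)$, so specializing the supremum defining $\mbox{Ch}_R(M)$ to $\fp=\fm$ gives $\mbox{Ch}_R(M)\ge\depth R-\width_R(M)=\depth R-\inf(M)$.

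The substance is the upper bound $\CMid_R(M)\le\depth R-\inf(M)$. Here I would invoke Remark \ref{iar}(1), which computes $\CMid_R(M)$ as an infimum over $\CM$-quasi-deformations $R\to R'\leftarrow Q$ in which $Q$ is complete \emph{and} the closed fibre of $R\to R'$ is Artinian. Since the competing values are integers bounded below by $\mbox{Ch}_R(M)$, fix such a quasi-deformation realizing the finite infimum, so that $\CMid_R(M)=\gid_Q(M\otimes_RR')-\gfd_Q(R')$ with $\gid_Q(M\otimes_RR')<\infty$. The homology of $M\otimes_RR'$ is finitely generated over $R'=Q/J$, hence over $Q$, so $M\otimes_RR'$ is a homologically finite $Q$-complex of finite Gorenstein injective dimension over the complete ring $Q$; the Bass-type formula underlying Remark \ref{gid} (namely \cite[Corollary 2.3]{CS10}) then gives
$$\gid_Q(M\otimes_RR')=\depth Q-\inf(M\otimes_RR').$$

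On the other side, $R'=Q/J$ is a cyclic $Q$-module with $J$ G-perfect, so $\gfd_Q(R')=\gd_Q(R')=\depth Q-\depth_Q(R')=\depth Q-\depth R'$, where the middle equality is the Auslander--Bridger formula \cite{AB} (applicable because G-perfectness makes $\gd_Q(R')$ finite) and the last uses $\depth_Q(R')=\depth R'$. Subtracting the two displays, $\CMid_R(M)=\depth R'-\inf(M\otimes_RR')$. It then remains to replace $R'$ by $R$ in the two surviving terms: faithful flatness of $R\to R'$ gives $\H_i(M\otimes_RR')\cong\H_i(M)\otimes_RR'$, nonzero exactly when $\H_i(M)$ is, so $\inf(M\otimes_RR')=\inf(M)$; and the Artinian closed fibre has depth $0$, so the standard depth formula for flat local homomorphisms (see \cite{BH}) gives $\depth R'=\depth R+\depth(R'/\fm R')=\depth R$. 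Hence $\CMid_R(M)=\depth R-\inf(M)$, as required.

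The main obstacle is the simultaneous control of the two ends of the quasi-deformation: completeness of $Q$ is what licenses the Bass formula for $\gid_Q$, whereas the Artinian closed fibre is exactly what collapses $\depth R'$ to $\depth R$. Remark \ref{iar}(1) is precisely the tool that arranges both conditions at once, and once it is in hand the remaining steps are bookkeeping with the change-of-ring identities recorded above.
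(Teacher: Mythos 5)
Your proof is correct and follows essentially the same route as the paper: both arguments pick a quasi-deformation with Artinian closed fibre realizing the infimum, compute $\gid_Q(M\otimes_RR')-\gfd_Q(R')=\depth R'-\inf(M\otimes_RR')=\depth R-\inf(M)$ via the Bass-type formula of \cite[Corollary 2.3]{CS10} and the Auslander--Bridger formula, and then sandwich $\mbox{Ch}_R(M)$ using Theorem \ref{T} and the specialization at $\fp=\fm$. The only (harmless) difference is that you invoke Remark \ref{iar}(1) to also arrange $Q$ complete before applying the Bass-type formula, which is in fact a slightly more careful justification of that step than the paper's direct appeal to Proposition \ref{iart}.
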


\begin{proof} By Proposition \ref{iart} there is a $\CM$-quasi-deformation $R\rightarrow R'\leftarrow Q$ such that the closed fibre of $R\rightarrow R'$ is Artinian and the first equality below holds. So that
\begin{align*}
\CMid_R(M)=&\gid_Q(M\otimes_RR') -\gfd_Q(R')\\[1ex]
        = &\depth Q-\inf(M\otimes_RR') -\depth Q+\depth R' \\[1ex]
        = & \depth R'-\inf(M\otimes_RR')  \\[1ex]
        = & \depth R-\inf(M).
\end{align*}
The second equality holds by \cite[Corollary 2.3]{CS10} and the Auslander-Bridger formula \cite{AB}, while the lase equality holds, because the closed fiber of $R\to R'$ is Artinian and \cite[Proposition 1.2.16]{BH}.

Now by Theorem \ref{T}, $\depth R-\inf(M)\le\mbox{Ch}_R(M)\le\CMid_R(M)=\depth R-\inf(M)$. Therefore $\CMid_R(M)=\mbox{Ch}_R(M)=\depth R-\inf(M)$.
\end{proof}

In concluding, recall that there are notions of Cohen-Macaulay
projective dimension, Cohen-Macaulay flat dimension and Cohen-Macaulay injective dimension of Holm and J{\o}rgensen, which are different with our Definition \ref{d}.

\begin{defn}\label{C}{\em (cf., \cite[Definition 2.3]{HJ}) Let $(R,\fm)$ be a local ring. For each homologically bounded $R$-complex $M$, the Cohen-Macaulay
projective, flat and injective dimension, of $M$ is defined as, respectively,
$$\CM\pd_R(M):=\inf\{\gpd_{R\ltimes C}(M)\mid C\text{ is a semidualizing module}\}$$
$$\CM\fd_R(M):=\inf\{\gfd_{R\ltimes C}(M)\mid C\text{ is a semidualizing module}\}$$
$$\CM\id_R(M):=\inf\{\gid_{R\ltimes C}(M)\mid C\text{ is a semidualizing module}\}.$$
Here $R\ltimes C$ denotes the trivial extension ring of $R$ by $C$; it is the $R$-module $R \oplus C$
equipped with the multiplication $(r,c)(r',c')=(rr',rc'+r'c)$.
}
\end{defn}

\begin{rem}{\em (1) For each homologically bounded $R$-complex $M$, we have
$$\CMpd_R(M)\le\CM\pd_R(M)$$
$$\CMfd_R(M)\le\CM\fd_R(M)$$
$$\CMid_R(M)\le\CM\id_R(M).$$
More precisely, assume that $\CM\pd_R(M)<\infty$ and choose a semidualizing $R$-module $C$ such that $\CM\pd_R(M)=\gpd_{R\ltimes C}(M)$. Then by \cite[Lemma 3.6]{G}, we have the $\CM$-quasi-deformation $R\to R\stackrel{\tau}{\leftarrow} Q$ where $Q:=R\ltimes C$ and $\tau(r,c)=r$, such that $\gd_Q(R)=0$. Thus we obtain $$\gpd_Q(M\otimes_RR)-\gfd_Q(R)=\gpd_{R\ltimes C}(M).$$ This shows the first inequality. The proof of the other two inequalities are the same as the first one.

(2) The finiteness of the Cohen-Macaulay homological dimensions in Definition \ref{C}, characterize
Cohen-Macaulay rings admitting a canonical module \cite[Theorem 5.1]{HJ}.

(3) Assume that $(R,\fm,k)$ is a Cohen-Macaulay ring, not admitting a canonical module (e.g., see \cite{FR70} for such an example). Then $\CMpd_R(k)<\infty$ (and, $\CMfd_R(k)<\infty$, $\CMid_R(k)<\infty$) but $\CM\pd_R(k)=\infty$ (and, $\CM\fd_R(k)=\infty$, $\CM\id_R(k)=\infty$).
 }
\end{rem}

\begin{lem}\label{t} Assume that $C$ is a semidualizing $R$-module and let $M$ be a homologically bounded $R$-complex. Consider $M$ as a $R\ltimes C$-complex via the natural surjection $\tau:R\ltimes C\to R$.
\begin{itemize}
  \item[(1)] If $\gfd_{R\ltimes C}(M)<\infty$, then $\gfd_{R\ltimes C}(M)=\rfd_R(M)$.
  \item[(2)] If $\gid_{R\ltimes C}(M)<\infty$, then $\gid_{R\ltimes C}(M)=\mbox{Ch}_R(M)$.
\end{itemize}
\end{lem}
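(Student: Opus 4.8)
The plan is to reduce each statement to the corresponding \emph{refinement} theorem over the trivial extension $R\ltimes C$, combined with a base-change identity that transports the invariant on the right-hand side back to $R$. As recorded in the Remark preceding the lemma, Gerko's \cite[Lemma 3.6]{G} shows that $R\to R\stackrel{\tau}{\leftarrow} R\ltimes C$ is a $\CM$-quasi-deformation with $\gd_{R\ltimes C}(R)=0$; since $R$ is a finitely generated $R\ltimes C$-module, the equality of G-dimensions for finitely generated modules (Remark \ref{2.3}(2)) gives $\gfd_{R\ltimes C}(R)=\gd_{R\ltimes C}(R)=0$. This single observation drives both parts.

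For part (1), I would apply Proposition \ref{R1} to the $\CM$-quasi-deformation $R\to R\leftarrow R\ltimes C$, in which the flat extension is the identity $R\to R$. This yields $\rfd_R(M)=\rfd_{R\ltimes C}(M\otimes_RR)-\rfd_{R\ltimes C}(R)=\rfd_{R\ltimes C}(M)-\rfd_{R\ltimes C}(R)$. Because $\gfd_{R\ltimes C}(R)=0$ is finite, the refinement theorem \cite[Theorem 8.8]{IS}, applied over $R\ltimes C$, forces $\rfd_{R\ltimes C}(R)=\gfd_{R\ltimes C}(R)=0$, whence $\rfd_{R\ltimes C}(M)=\rfd_R(M)$. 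Now if $\gfd_{R\ltimes C}(M)<\infty$, then the same refinement theorem over $R\ltimes C$ gives $\gfd_{R\ltimes C}(M)=\rfd_{R\ltimes C}(M)=\rfd_R(M)$, which is the claim.

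For part (2), the analogous route is to establish the base-change identity $\mbox{Ch}_{R\ltimes C}(M)=\mbox{Ch}_R(M)$ and then invoke the refinement theorem \cite[Theorem 2.2]{CS10}: once $\gid_{R\ltimes C}(M)<\infty$ one gets $\gid_{R\ltimes C}(M)=\mbox{Ch}_{R\ltimes C}(M)=\mbox{Ch}_R(M)$. Unlike part (1), no single proposition delivers the identity for $\mbox{Ch}$ outright, since Proposition \ref{CH5} furnishes only one inequality; so I would prove it through the prime-by-prime formula. As $0\ltimes C$ is a nilpotent ideal, $\Spec(R\ltimes C)$ consists exactly of the primes $\fp\ltimes C$ for $\fp\in\Spec(R)$, with $(R\ltimes C)_{\fp\ltimes C}\cong R_\fp\ltimes C_\fp$ and $M_{\fp\ltimes C}\cong M_\fp$, the $R\ltimes C$-action factoring through $\tau$. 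At each such prime, $\depth(R\ltimes C)_{\fp\ltimes C}=\min\{\depth R_\fp,\depth_{R_\fp}C_\fp\}=\depth R_\fp$, using depth-invariance along the module-finite extension $R_\fp\to R_\fp\ltimes C_\fp$ together with the fact that a semidualizing module satisfies $\depth_{R_\fp}C_\fp=\depth R_\fp$; meanwhile Lemma \ref{CH2}, applied to the surjection $R_\fp\ltimes C_\fp\to R_\fp$, gives $\width_{(R\ltimes C)_{\fp\ltimes C}}(M_{\fp\ltimes C})=\width_{R_\fp}(M_\fp)$. Taking the supremum of the differences over $\fp\in\Spec(R)$ then yields $\mbox{Ch}_{R\ltimes C}(M)=\mbox{Ch}_R(M)$.

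I expect the main obstacle to lie in part (2): carefully pinning down the localizations of $R\ltimes C$, identifying $M_{\fp\ltimes C}$ with $M_\fp$ as a complex over $(R\ltimes C)_{\fp\ltimes C}$, and—most importantly—justifying $\depth(R\ltimes C)_{\fp\ltimes C}=\depth R_\fp$. This last equality is precisely where the semidualizing hypothesis on $C$ is essential, via $\depth_{R_\fp}C_\fp=\depth R_\fp$; without it the depth of the trivial extension could drop and the Chouinard identity would fail. Once these pointwise computations are secured, both parts close formally through the refinement theorems of \cite{IS} and \cite{CS10}.
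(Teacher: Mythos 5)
Your proof is correct, and part (2) follows the paper's own argument almost verbatim: the paper also identifies $\Spec(R\ltimes C)$ with $\{\fp\ltimes C\mid\fp\in\Spec(R)\}$, uses $(R\ltimes C)_{\fp\ltimes C}\cong R_\fp\ltimes C_\fp$ and $M_{\fp\ltimes C}\cong M_\fp$, transfers width along the surjection $R_\fp\ltimes C_\fp\to R_\fp$ via Lemma \ref{CH2}, and uses $\depth_{R_\fp}(R_\fp\oplus C_\fp)=\min\{\depth R_\fp,\depth_{R_\fp}C_\fp\}=\depth R_\fp$ (the semidualizing hypothesis entering exactly where you say it does), before closing with \cite[Theorem 2.2]{CS10}. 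Where you genuinely diverge is part (1): the paper runs the same pointwise computation again, using \cite[Theorem 8.8]{IS} to write $\gfd_{R\ltimes C}(M)$ as $\sup\{\depth(R\ltimes C)_{\fp\ltimes C}-\depth_{(R\ltimes C)_{\fp\ltimes C}}(M_{\fp\ltimes C})\}$ and \cite[Proposition 5.2(1)]{i99} to descend depths to $R_\fp$, whereas you instead feed the $\CM$-quasi-deformation $R\to R\leftarrow R\ltimes C$ (justified by Gerko's \cite[Lemma 3.6]{G}, as in the Remark) into Proposition \ref{R1} to get $\rfd_{R\ltimes C}(M)=\rfd_R(M)$ directly, and then apply the refinement theorem once. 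Your route is legitimate and somewhat more economical, since it reuses machinery already established earlier in the section rather than repeating the prime-by-prime transfer; its only cost is that it is special to the flat case, so you still have to do the explicit pointwise work for the injective case, which is precisely where the paper's uniform treatment of (1) and (2) pays off.
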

\begin{proof} Note that $\Spec(R\ltimes C)=\{\fp\ltimes C\mid \fp\in\Spec(R)\}$ and $(R\ltimes C)_{\fp\ltimes C}\cong R_{\fp}\ltimes C_{\fp}$ by \cite[Exercise 6.2.12]{BS}. Let $L$ be an $R$-module which is an $R\ltimes C$-module via the surjection $\tau:R\ltimes C\to R$, and let $\fp$ be a prime ideal of $R$. Then $\varphi:L_{\fp\ltimes C}\to L_{\fp}$ sending $l/(r,c)$ to $l/r$ is an $R_{\fp}$-isomorphism. By \cite[Theorem 8.8]{IS} we have the first equality below.
\begin{align*}
\gfd_{R\ltimes C}(M)= & \sup\{\depth (R\ltimes C)_{\fp\ltimes C} - \depth_{(R\ltimes C)_{\fp\ltimes C}}(M_{\fp\ltimes C})\mid \fp\in\Spec(R)\}\\[1ex]
        = & \sup\{\depth_{R_{\fp}\ltimes C_{\fp}} (R_{\fp}\ltimes C_{\fp}) - \depth_{R_{\fp}\ltimes C_{\fp}}(M_{\fp\ltimes C})\mid \fp\in\Spec(R)\}\\[1ex]
        = & \sup\{\depth_{R_{\fp}} (R_{\fp}\oplus C_{\fp}) - \depth_{R_{\fp}}(M_{\fp})\mid \fp\in\Spec(R)\}\\[1ex]
        = & \sup\{\depth R_{\fp} - \depth_{R_{\fp}}(M_{\fp})\mid \fp\in\Spec(R)\}\\[1ex]
        = & \rfd_R(M).
\end{align*}
The third equality holds since there is a surjection $R_{\fp}\ltimes C_{\fp}\to R_{\fp}$ and \cite[Proposition 5.2(1)]{i99}. The fourth equality uses $$\depth_{R_{\fp}} (R_{\fp}\oplus C_{\fp})=\min\{\depth R_{\fp},\depth_{R_{\fp}}(C_{\fp})\}=\depth R_{\fp}.$$

The proof of (2) is the same as (1) using \cite[Theorem 2.2]{CS10} instead of \cite[Theorem 8.8]{IS}, and Lemma \ref{CH2}, instead of \cite[Proposition 5.2(1)]{i99}.
\end{proof}

\begin{cor}\label{U} Let $M$ be a homologically bounded $R$-complex.
\begin{itemize}
  \item[(1)] If $\CM\fd_R(M)<\infty$, then $\CM\fd_R(M)=\CMfd_R(M)$.
  \item[(2)] If $\CM\id_R(M)<\infty$, then $\CM\id_R(M)=\CMid_R(M)$.
\end{itemize}
\end{cor}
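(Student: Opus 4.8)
The plan is to prove each equality by a squeezing argument, pinning both the Holm--J{\o}rgensen dimension and our dimension between a common bound supplied by Lemma \ref{t}. Both parts have the same shape: the inequality $\CMfd_R(M)\le\CM\fd_R(M)$ (resp. $\CMid_R(M)\le\CM\id_R(M)$) recorded in the remark preceding Lemma \ref{t} supplies one direction, the refinement inequalities already established supply a matching bound from below, and under the finiteness hypothesis all of the quantities in play collapse to a single invariant --- $\rfd_R(M)$ in the flat case and $\mbox{Ch}_R(M)$ in the injective case.

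For part (1), I would first use the hypothesis $\CM\fd_R(M)<\infty$ together with Definition \ref{C} to choose a semidualizing $R$-module $C$ realizing the infimum, so that $\CM\fd_R(M)=\gfd_{R\ltimes C}(M)<\infty$; a finite infimum of a set of integers is attained, so such a $C$ exists, and this is the only place the finiteness hypothesis is genuinely used. Then Lemma \ref{t}(1) identifies $\gfd_{R\ltimes C}(M)=\rfd_R(M)$, whence $\CM\fd_R(M)=\rfd_R(M)$. On the other hand, Corollary \ref{rfdcmfd} gives the unconditional inequality $\rfd_R(M)\le\CMfd_R(M)$, and the remark preceding Lemma \ref{t} gives $\CMfd_R(M)\le\CM\fd_R(M)$. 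Combining these,
$$\rfd_R(M)\le\CMfd_R(M)\le\CM\fd_R(M)=\rfd_R(M),$$
forces equality throughout; in particular $\CMfd_R(M)=\CM\fd_R(M)$.

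For part (2), the argument is formally identical with $\rfd$ replaced by $\mbox{Ch}$. I would choose a semidualizing $C$ with $\CM\id_R(M)=\gid_{R\ltimes C}(M)<\infty$, apply Lemma \ref{t}(2) to obtain $\CM\id_R(M)=\mbox{Ch}_R(M)$, and then sandwich this against the lower bound $\mbox{Ch}_R(M)\le\CMid_R(M)$ from Theorem \ref{T} and the upper bound $\CMid_R(M)\le\CM\id_R(M)$ from the remark preceding Lemma \ref{t}:
$$\mbox{Ch}_R(M)\le\CMid_R(M)\le\CM\id_R(M)=\mbox{Ch}_R(M).$$
Equality is again forced, giving $\CMid_R(M)=\CM\id_R(M)$.

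The substantive content is carried entirely by the previously established inputs --- Lemma \ref{t}, Corollary \ref{rfdcmfd}, and Theorem \ref{T} --- so I do not expect a serious obstacle. The one point requiring care is that Lemma \ref{t} applies only when the relevant dimension over $R\ltimes C$ is finite; this is exactly what the hypothesis guarantees once the infimum in Definition \ref{C} is realized by a concrete semidualizing $C$, which is legitimate precisely because a finite infimum of a set of extended integers is attained. The remaining check is only that the three inequalities in each chain point in compatible directions so that the chain closes, which they do.
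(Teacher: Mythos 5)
Your proposal is correct and follows essentially the same route as the paper: both squeeze the two dimensions in the chain $\rfd_R(M)\le\CMfd_R(M)\le\CM\fd_R(M)=\rfd_R(M)$ (resp.\ $\mbox{Ch}_R(M)\le\CMid_R(M)\le\CM\id_R(M)=\mbox{Ch}_R(M)$) using Corollary \ref{rfdcmfd} (resp.\ Theorem \ref{T}), the remark preceding Lemma \ref{t}, and Lemma \ref{t}. Your explicit remark that the finite infimum in Definition \ref{C} is attained by a concrete semidualizing module $C$ is a point the paper leaves implicit, but it is the same argument.
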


\begin{proof} Note that there are the inequalities $$\rfd_R(M)\le\CMfd_R(M)\le\CM\fd_R(M)=\rfd_R(M)$$ (resp., $\mbox{Ch}_R(M)\le\CMid_R(M)\le\CM\id_R(M)=\mbox{Ch}_R(M)$) by Corollary \ref{rfdcmfd} (resp., Theorem \ref{T}), and Lemma \ref{t}.
\end{proof}

\noindent {\bf Acknowledgement.} The authors would like to thank the referee for his/her careful reading of the manuscript and several comments
which greatly improved the paper. Parviz Sahandi would like to thank Sean Sather-Wagstaff for comments on an earlier version of this paper. Part of this work was completed while Siamak Yassemi was visiting the Institut des Hautes Etudes Scientifiques (IHES) in Bures-sur-Yvette, France. He wishes to express his gratitude to the Institute for its warm hospitality and for providing a stimulating research environment.

\end{document}